\newtheorem{thm}{Theorem}[section]
\newtheorem{lem}[thm]{Lemma} 
\theoremstyle{definition}
\newtheorem{defn}[thm]{Definition}
\newtheorem{ex}[thm]{Example}
\theoremstyle{remark}
\newtheorem{rmrk}[thm]{Remark}  
\numberwithin{equation}{section}
\newcommand{\be}{\begin{equation}}
\newcommand{\ee}{\end{equation}}
\newcommand{\bee}{\begin{equation*}}
\newcommand{\eee}{\end{equation*}}
\newcommand{\R}{\mathbb{R}}
\newcommand{\union}{\cup}
\newcommand{\inner}[1]{\left\langle#1\right\rangle}
\newcommand{\diam}{\operatorname{Diam}}
\newcommand{\vol}{\operatorname{Vol}}
\newcommand{\ric}{\rm{Ric}}
\newcommand{\cut}{\operatorname{cut}}
\newcommand{\hess}{\operatorname{Hess}}
\newcommand{\GHto}{\stackrel{\textrm{GH}}{\longrightarrow}}
\newcommand{\Fto}{\stackrel {\mathcal{F}}{\longrightarrow} }
\newcommand{\Lip}{\operatorname{Lip}}
\newcommand{\intcurr}{{\mathbf I}}      
\newcommand{\spt}{\operatorname{spt}}
\begin{document}

\title{Volumes and Limits of Manifolds with Ricci Curvature and Mean Curvature Bounds}
\author{Raquel Perales}
\thanks{The author's research was funded in part by Prof. Sormani's
NSF Grant DMS 10060059. In addition, the author received funding from Stony Brook as a doctoral student.}
\address{SUNY at Stony Brook}
\email{praquel@math.sunysb.edu}

\keywords{}


\begin{abstract}
We consider smooth Riemannian manifolds with nonnegative Ricci curvature 
and smooth boundary.   First we prove a global Laplace comparison theorem 
in the barrier sense for the distance to the boundary.  We apply this theorem 
to obtain volume estimates of the manifold and of regions of the manifold near
the boundary depending upon an upper bound on the area and on the inward 
pointing mean curvature of the boundary.  We prove that families of oriented
manifolds with uniform bounds of this type are compact with respect to the
Sormani-Wenger Intrinsic Flat (SWIF) distance.   
\end{abstract}

\maketitle
\newpage

\section{Introduction}

In the past few decades many important compactness theorems have been 
proven for families of smooth manifolds without boundary.  Gromov has proven that 
families of manifolds with nonnegative Ricci curvature and uniformly
bounded diameter are precompact in the Gromov-Hausdorff (GH) sense \cite{Gromov-metric}.
Cheeger-Colding have proven many beautiful properties of the GH limits of these 
manifolds including rectifiability of the GH limit spaces \cite{ChCo-PartI}.   

Little is known about the precompactness of families of manifolds with boundary. In particular, it is unknown whether sequences of manifolds with
nonnegative Ricci curvature and uniformly bounded mean curvature and area
of the boundary are precompact in the GH sense.  Nor is it known whether the GH
limits of such sequences are rectifiable.

Kodani \cite{Kodani-1990} has proven GH precompactness of
families with uniform bounds on sectional curvature.  Wong  \cite{Wong-2008} has proven GH precompactness of families with uniform bounds for the Ricci curvature, the second fundamental form and the diameter. Neither Kodani nor Wong study the rectifiability
of the GH limit spaces of manifolds in the families they study.  Anderson-Katsuda-Kurylev-Lassas-Taylor \cite{Anderson-2004} and Knox \cite{Knox-2012} have proven $C^{1,\alpha}$ and rectifiability of the limit spaces assuming
one has sequences with significant additional bounds on their manifolds.  See \cite{PerSurvey} for a survey of these precompactness theorems for manifolds with boundary.

We prove precompactness theorems, Theorems 1.4 and 1.5, for families of oriented Riemannian manifolds $(M,g)$ with nonnegative Ricci curvature and uniform upper bounds on the area and the
inward pointing mean curvature of the boundary:
\be
\vol(\partial M)\le A \textrm{ and }  H_{\partial M}(q)\le H. 
\ee
Our precompactness is with respect to the Sormani-Wenger Intrinsic Flat (SWIF)
distance, in which the limit spaces are countably $\mathcal{H}^n$ rectifiable, where $\mathcal{H}^n$ denotes the $n$-dimensional Hausdorff measure.

One important feature of the SWIF distance is that if a sequence of oriented manifolds with volume and area uniformly bounded converges in GH sense then a subsequence converges in SWIF sense. Moreover, if the SWIF limit space is not the zero current space then it can be seen as a subspace of the GH limit. See Theorem 3.20 in \cite{SorWen2}. Nonetheless, SWIF convergence does not imply GH subconvergence as can be seen in Example \ref{ex-IlmanemSeq}.   In \cite{SorWen1} Sormani-Wenger have shown that for manifolds with non negative Ricci curvature (without boundary) the GH and SWIF limits agree. This is not necessarily true for manifolds with boundary. For example, consider a sequence of $n$-closed round balls of the same radii with one increasingly thin tip. The sequence converges in SWIF sense to a round   closed ball but it converges to a round closed ball with a segment attached in the GH sense (c.f. Example A.4 in \cite{SorWen2}).

In order to prove our precompactness theorem we need to prove theorems for manifolds with boundary that were previously proven for manifolds with no boundary. One of the key tools in the work of Gromov is the Bishop-Gromov Volume Comparison Theorem \cite{Gromov-metric}.   A key tool in the work of Cheeger-Colding is the Abresch-Gromoll Laplace Comparison Theorem \cite{Abresch-Gromoll}.  In fact, the Abresch-Gromoll Laplace Comparison Theorem may be applied to prove the Bishop-Gromov Volume Comparison Theorem (c.f. \cite{Cheeger-Criticalp}).    

In Section 2 we prove Theorems \ref{thm-Blaplaciancomparison}, \ref{thm-volumebounds} and  \ref{thm-areabounds}. We consider connected Riemannian manifolds $(M^n, g)$ with smooth boundary $\partial M$.
We denote by $d:M \times M \to \R$ the metric on $(M,g)$ given by $g$. Suppose that 
$(M,d)$ is a complete metric space. Define $r: M \to \R$ by 
\be 
r(p):=d(p, \partial M).
\ee
The laplacian of $r$ is denoted by $\Delta r$. The mean curvature of $\partial M$ with respect to the the normal inward pointing direction is denoted by $H_{\partial M}: \partial M \to \R$. 

\begin{thm}\label{thm-Blaplaciancomparison}
Let $n \geq 2$ and $M^n$  be an $n$-dimensional connected Riemannian manifold with boundary with $\ric(M \setminus \partial M) \geq 0$ and $(M,d)$ complete. Then for all $p \in M$ 
\be \label{eq-laplacian}
\Delta r (p)\leq \frac{(n-1)H_{\partial M}(q) }{H_{\partial M}(q) r(p)+n-1}
\ee
holds in the barrier sense, where $q \in \partial M$ such that $r(p)=d(p,q)$.
\end{thm}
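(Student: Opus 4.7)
The plan is to establish \eqref{eq-laplacian} by a Riccati comparison along a minimizing normal geodesic from $\partial M$ to $p$, with initial condition the boundary mean curvature $H_{\partial M}(q)$, and then to pass to the barrier sense at points where $r$ fails to be smooth via an upper barrier built from parallel hypersurfaces.

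For $p \in M$ with $r(p) > 0$, the completeness of $(M,d)$ and closedness of $\partial M$ produce $q \in \partial M$ with $d(p,q) = r(p)$, and a first-variation argument forces the minimizer from $q$ to $p$ to be a unit-speed geodesic $\gamma \colon [0,r(p)] \to M$ meeting $\partial M$ orthogonally at $q = \gamma(0)$, so that $\gamma'(0) = \nu_q$ is the inward unit normal. On the open subinterval $(0, r(p))$, strict minimality of $\gamma$ rules out cut and focal points of $\partial M$, so $r$ is smooth near each $\gamma(t)$ there. Let $S(t)$ be the Weingarten map of the equidistant hypersurface $\{r = t\}$ at $\gamma(t)$, taken with respect to $\nabla r = \gamma'(t)$, so that $\Delta r(\gamma(t)) = \operatorname{tr} S(t)$. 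The classical matrix Riccati equation
\[
  S'(t) + S(t)^2 + R\bigl(\,\cdot\,, \gamma'(t)\bigr)\gamma'(t) = 0,
\]
combined with $\operatorname{Ric}(\gamma',\gamma') \geq 0$ and the Cauchy--Schwarz bound $\operatorname{tr}(S^2) \geq (\operatorname{tr} S)^2/(n-1)$, yields the scalar Riccati inequality $h'(t) \leq -h(t)^2/(n-1)$ for $h(t) := \Delta r(\gamma(t))$, with limiting initial value $h(0^+) = H_{\partial M}(q)$. A direct comparison with the explicit solution $\mu(t) = (n-1)H_{\partial M}(q)\big/\bigl((n-1) + H_{\partial M}(q)\,t\bigr)$ of $\mu' = -\mu^2/(n-1)$ with $\mu(0) = H_{\partial M}(q)$ then gives $h(t) \leq \mu(t)$ throughout the open interval.

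If $r$ is smooth in a full neighborhood of $p$, this already yields \eqref{eq-laplacian}. If not, I would construct $C^2$ upper barriers for $r$ at $p$ as follows. For small $\delta > 0$ pick an open piece $V_\delta \subset \partial M$ around $q$ small enough that the normal exponential map $F(y,t) = \exp_y(t\nu_y)$ is an embedding on $V_\delta \times [0,\delta]$, set $\Sigma_\delta := F(V_\delta \times \{\delta\})$, and define $f_\delta(x) := d(x,\Sigma_\delta) + \delta$. Since $\gamma(\delta) \in \Sigma_\delta$ and $\Sigma_\delta \subset \{r = \delta\}$, the triangle inequality shows $f_\delta(p) = r(p)$ and $f_\delta \geq r$ near $p$, while rerunning the Riccati analysis with $\Sigma_\delta$ in place of $\partial M$ (its mean curvature at $\gamma(\delta)$ converges to $H_{\partial M}(q)$ as $\delta \to 0$) forces $\Delta f_\delta(p) \to \mu(r(p))$.

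The delicate point, and the main obstacle, is guaranteeing $C^2$-smoothness of $f_\delta$ at $p$ when $p$ lies on the focal locus of $\partial M$: a Jacobi-field computation shows that any focal point of $\partial M$ along $\gamma$ is inherited as a focal point of each parallel $\Sigma_\delta$, so the naive parallel choice does not produce a smooth barrier at such a $p$. I would resolve this either by perturbing $\Sigma_\delta$ off of being truly parallel so that its shape operator is shifted and the first focal time is pushed strictly past $r(p)$, or, in the extreme case where $p$ itself is a focal point, by observing that $h(t) \to -\infty$ along $\gamma$ as $t \to r(p)$ and using a quadratic-correction barrier --- modeled on the barriers at the center of a round ball --- whose Laplacian at $p$ tends to $-\infty$, rendering the target inequality vacuous in the barrier sense. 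This focal-locus analysis is where essentially all of the technical work of the argument lies; the Riccati comparison above is the clean conceptual core of the proof.
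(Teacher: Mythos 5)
Your outline matches the paper's strategy---Bochner/Riccati comparison along $\gamma$ where $r$ is smooth, then an upper barrier at $\cut(\partial M)$---and you correctly diagnose that a truly parallel slice $\Sigma_\delta$ fails when $p$ is a focal point. But you leave the key construction as a plan rather than carrying it out, and the second of your two fallback ideas is misconceived.

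The paper's barrier is the Calabi-type bump hypersurface
\[
  N_{\delta,\alpha} := \bigl\{\exp\bigl(g(z)\nabla r(z)\bigr) : z \in \partial M\bigr\},\qquad
  r_{p,\varepsilon} := d(\,\cdot\,, N_{\delta,\alpha}) + \delta,
\]
where $g : \partial M \to [0,\delta]$ is smooth, supported near $q$, attains a strict maximum $g(q)=\delta$ at $q$ alone, and has small Hessian there, $\sum_i g_i''(0) \geq -\alpha$. That the maximum of $g$ is strict and unique makes $\gamma(\delta) = \exp(\delta\nabla r(q))$ the \emph{unique} nearest point of $N_{\delta,\alpha}$ to $p$ (this simultaneously handles cut points of $\partial M$ coming from multiple minimizers, a case your sketch does not explicitly address), while the negative Hessian of $g$ tilts the shape operator of $N_{\delta,\alpha}$ at $\gamma(\delta)$ relative to the level set $\{r=\delta\}$ so that focal points are pushed past $p$. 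The mean-curvature identity $H_{N_{\delta,\alpha}}(\gamma(\delta)) = \Delta r(\gamma(\delta)) - \sum_i g_i''(0)$ then shows $\Delta r_{p,\varepsilon}(p)$ converges, as $(\delta,\alpha) \to (0,0)$, to the right-hand side of \eqref{eq-laplacian}. This is precisely the perturbation you gesture at as your first option, and it is the heart of the argument; the proof is not complete without it.

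Your alternative route---a "quadratic-correction barrier" for "the extreme case where $p$ itself is a focal point"---is a false dichotomy: a focal point \emph{is} a cut point, so this is not a separate case beyond the barrier step, but precisely what the perturbed $N_{\delta,\alpha}$ must handle. And noting that $\Delta r(\gamma(t)) \to -\infty$ does not by itself "render the target inequality vacuous in the barrier sense": to assert $\Delta r(p) \leq a$ in the barrier sense one must still exhibit, for every $\varepsilon>0$, a concrete $C^2$ upper barrier whose Laplacian at $p$ is $\leq a+\varepsilon$, which is exactly the construction your sketch omits.
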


In Theorem \ref{thm-Blaplaciancomparison} we get $\Delta r(p) \leq 0$ when $H_{\partial M}=0$. If $H_{\partial M}= (n-1)/H$ then $\Delta r(p) \leq (n-1)/(r(p)+H)$.

Sakurai has recently proven a Laplacian comparison theorem for the same distance function
whenever $r$ is smooth \cite{Sakurai} \footnote{His paper appeared on the
arxiv after our original posting.}.  In our paper we also include points where $r$ is not smooth obtaining a global Laplacian comparison theorem in the barrier sense. Abresch-Gromoll's original Laplacian comparison theorem is also proven globally in the barrier sense for distance functions on manifolds without boundary \cite{Abresch-Gromoll}. This
global comparison allows one to apply the maximum principle and has much
stronger consequences than a Laplacian comparison theorem which only holds
where the function is smooth.

In Subsection \ref{subsec-VolBounds} we apply Theorem \ref{thm-Blaplaciancomparison} to obtain volume and area estimates for $M^{\delta_2}\setminus M^{\delta_1}$ and $\partial M^\delta$, respectively, where
\be 
M^{\delta}: = \{ p \in M | r(p)> \delta \}
\ee
and $\partial M^\delta$ is the boundary (as a metric subspace of $M$) of $M^\delta$. Note that $\partial M^\delta \subset r^{-1}(\delta)$ but these sets are not necessarily equal.
Different volume estimates were obtained by Heintz and Karcher in \cite{VolComparison} using Jacobi fields. In our theorem, $A_{n,H}: [0, \infty) \to \R$ is the function given by
\begin{align}\label{eq-AHn}
A_{n,H}(\delta) =
\begin{cases}
(H\delta+n-1)^{n-1}/(n-1)^{n-1}& \textrm{ if } H\delta+n-1\geq 0,\\
0 &\textrm{ otherwise, } 
\end{cases}
\end{align}
where $n \geq 2$ and $H \in \R$.

\begin{thm}\label{thm-volumebounds}
Let $n \geq 2$ and $M^n$ be an $n$-dimensional and connected Riemannian manifold with smooth boundary such that $(M,d)$ is complete, $\ric(M\setminus \partial M) \geq 0$, $H_{\partial M} \leq H$ and $\vol(\partial M) \leq \infty$. 
If $\delta_1 \geq \delta_2 \geq 0$ then
\be 
\vol(M^{\delta_2} \setminus M^{\delta_1}) \leq \vol(\partial M) \int_{\delta_2}^{\delta_1}
A_{n,H}(t)dt,
\ee
where $A_{n,H}$ is as in (\ref{eq-AHn}).
If $\diam(M) \leq D$ then 
\be 
\vol(M)\leq \vol(\partial M) \int_{0}^{\tilde{D}} A_{n,H}(t)dt, 
\ee
where $\tilde{D}=D$ if $H\geq0$ and $\tilde{D}=\min\{D, -(n-1)/H\}$
if $H<0$.
\end{thm}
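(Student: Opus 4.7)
The plan is to parametrize a full-measure subset of $M$ via the inward normal exponential map from $\partial M$ and then bound its Jacobian pointwise using Theorem \ref{thm-Blaplaciancomparison}. Let $\nu(q)$ denote the inward unit normal at $q\in\partial M$ and define $\Phi\colon\partial M\times[0,\infty)\to M$ by $\Phi(q,t)=\exp_q(t\nu(q))$. For each $q$, let $\rho(q)\in(0,\infty]$ denote the cut time of the normal geodesic $t\mapsto\Phi(q,t)$, i.e.\ the supremum of $t$ such that $r(\Phi(q,s))=s$ for all $s\in[0,t]$. Standard adaptations of the cut-locus analysis (cf.\ the Itoh--Tanaka structure theorem for the cut locus of a point) show that $\Phi$ is a diffeomorphism from $D:=\{(q,t):0\leq t<\rho(q)\}$ onto a set of full $\mathcal{H}^n$-measure in $M$, and that $r\circ\Phi(q,t)=t$ is smooth on $D$.

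Let $J(q,t)$ denote the Jacobian of $\Phi$ with respect to $d\vol_{\partial M}\otimes dt$. The first variation of volume (or a direct Jacobi-field computation) gives
\bee
\frac{\partial}{\partial t}\log J(q,t)=\Delta r(\Phi(q,t)),\qquad J(q,0)=1,
\eee
for $0\leq t<\rho(q)$. Since $r$ is smooth on $\Phi(D)$, the barrier Laplacian coincides with the classical one there, so Theorem \ref{thm-Blaplaciancomparison} applies pointwise and yields
\bee
\frac{\partial}{\partial t}\log J(q,t)\leq\frac{(n-1)H_{\partial M}(q)}{H_{\partial M}(q)\,t+n-1}=\frac{d}{dt}\log A_{n,H_{\partial M}(q)}(t).
\eee
Integrating from $0$ to $t$ gives $J(q,t)\leq A_{n,H_{\partial M}(q)}(t)$ on $[0,\rho(q))$; in particular, positivity of $J$ forces $\rho(q)\leq-(n-1)/H_{\partial M}(q)$ whenever $H_{\partial M}(q)<0$. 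An elementary check using (\ref{eq-AHn}) shows that for fixed $t\geq 0$ the function $h\mapsto A_{n,h}(t)$ is nondecreasing, so $H_{\partial M}(q)\leq H$ gives $A_{n,H_{\partial M}(q)}(t)\leq A_{n,H}(t)$, and hence $J(q,t)\leq A_{n,H}(t)$ on all of $[0,\rho(q))$.

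Applying the change-of-variables formula for $\Phi$ (using that $M\setminus\Phi(D)$ is $\mathcal{H}^n$-null), the preimage of $M^{\delta_2}\setminus M^{\delta_1}=\{\delta_2<r\leq\delta_1\}$ is $\{(q,t)\in D:\delta_2<t\leq\delta_1\}$, and therefore
\bee
\vol(M^{\delta_2}\setminus M^{\delta_1})=\int_{\partial M}\int_{(\delta_2,\delta_1]\cap[0,\rho(q))}J(q,t)\,dt\,d\vol_{\partial M}(q)\leq\vol(\partial M)\int_{\delta_2}^{\delta_1}A_{n,H}(t)\,dt,
\eee
which is the first inequality. For the second, $\diam(M)\leq D$ forces $r\leq D$ on $M$, so applying the first inequality with $\delta_2=0$ and $\delta_1=D$ yields $\vol(M)\leq\vol(\partial M)\int_0^D A_{n,H}(t)\,dt$. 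When $H\geq 0$ this is already in the stated form, while for $H<0$ the integrand vanishes on $(-(n-1)/H,\infty)$, truncating the integral at $\tilde D=\min\{D,-(n-1)/H\}$.

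The main technical obstacle is the cut-locus analysis underlying Step 1: verifying that $\rho$ is lower semicontinuous, that $\Phi|_D$ is a diffeomorphism onto its image, and that the cut locus of $\partial M$ has zero $\mathcal{H}^n$-measure. Once these standard but nontrivial facts are in place, everything reduces to the scalar ODE comparison for $\log J$ encoded in Theorem \ref{thm-Blaplaciancomparison}.
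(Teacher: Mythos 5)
Your proof is correct and follows essentially the same route as the paper: both parametrize a full-measure subset of $M$ by the inward normal exponential map, identify the radial logarithmic derivative of the Jacobian with $\Delta r$, apply the Laplacian comparison along each normal geodesic before the cut time to bound the Jacobian by $A_{n,H}$, and then integrate by Fubini/coarea. The only cosmetic differences are that you invoke the barrier-sense Theorem~\ref{thm-Blaplaciancomparison} restricted to the smooth domain where the paper directly cites the smooth comparison Theorem~\ref{thm-laplaciancomparison}, and you make the monotonicity of $h\mapsto A_{n,h}(t)$ and the measure-zero cut-locus step explicit, both of which the paper handles more tersely (via Lemma~\ref{lem-SmoothArea} and a remark).
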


Explicitly, the integral of $A_{n,H}$ is the following 

\begin{align}
\int_{\delta_2}^{\delta_1}A_{n,H}(t)dt=
\begin{cases}
\delta_1- \delta_2 &\textrm{ if } H=0\\
\frac{n-1}{nH}
\left(\frac{H\delta+ n-1}{n-1}\right)^n \bigg
\rvert_{\delta_2}^{\tilde{\delta_1}} 
&  \textrm{ if }  H \neq 0,
\end{cases}
\end{align}
where $\tilde{\delta_1}=\delta_1$ if $H\geq0$ and $\tilde{\delta_1}=\min\{\delta_1, -(n-1)/H\}$
if $H<0$.

We see that the equality of both, volume and area, estimates is achieved by all the Riemannian manifolds of the sequence given in Example \ref{ex-diam}. Also for the standard ball of radius $R$ in $n$-euclidean space.

\begin{thm}\label{thm-areabounds}
Let $n \geq 2$, $M^n$  be an $n$-dimensional connected Riemannian manifold with smooth boundary such that $(M,d)$ is complete, $\ric(M\setminus \partial M) \geq 0$, $H_{\partial M} \leq H$ and $\vol(\partial M) \leq \infty$. Then, 
$\mathcal{L}^1$-almost everywhere, 
\be 
\vol(  \partial M^\delta ) \leq \vol(\partial M) A_{n,H}(\delta),
\ee
where $A_{n,H}$ is as in (\ref{eq-AHn}).
\end{thm}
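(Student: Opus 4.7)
The plan is to combine Theorem \ref{thm-volumebounds} with the coarea formula applied to the distance function $r$.

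Define the monotone function $V:[0,\infty) \to [0,\infty)$ by $V(\delta) := \vol(\{p \in M : r(p) \leq \delta\})$. Since $\partial M$ has $n$-dimensional volume zero, $V(\delta_1) - V(\delta_2) = \vol(M^{\delta_2} \setminus M^{\delta_1})$ for $\delta_1 \geq \delta_2 \geq 0$, so Theorem \ref{thm-volumebounds} gives
\be
V(\delta_1) - V(\delta_2) \leq \vol(\partial M) \int_{\delta_2}^{\delta_1} A_{n,H}(t)\, dt.
\ee
Since $A_{n,H}$ is locally bounded, $V$ is locally absolutely continuous, hence differentiable $\mathcal{L}^1$-a.e., and
\be
V'(\delta) \leq \vol(\partial M)\, A_{n,H}(\delta) \qquad \text{for } \mathcal{L}^1\text{-a.e. } \delta.
\ee

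Next I would identify $V'(\delta)$ with the $(n-1)$-dimensional measure of the level set. Since $r$ is $1$-Lipschitz on $M$, the coarea formula for Lipschitz functions applies:
\be
V(\delta) = \int_{\{r \leq \delta\}} 1\, d\vol \;\geq\; \int_{\{r \leq \delta\}} |\nabla r|\, d\vol = \int_0^\delta \mathcal{H}^{n-1}(r^{-1}(t))\, dt.
\ee
The key point is that in fact $|\nabla r| = 1$ $\vol$-a.e.\ on $M \setminus \partial M$: at any point $p$ where $r$ is differentiable, there exists a minimizing unit-speed geodesic $\gamma$ from $p$ to a foot point on $\partial M$, along which $r$ decreases at unit rate, giving $|\nabla r(p)| \geq 1$, while the Lipschitz estimate gives $|\nabla r(p)| \leq 1$. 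Thus the coarea inequality is in fact an equality, and differentiating yields $V'(\delta) = \mathcal{H}^{n-1}(r^{-1}(\delta))$ for $\mathcal{L}^1$-a.e.\ $\delta$.

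Finally I would verify the inclusion $\partial M^\delta \subset r^{-1}(\delta)$: any point in the topological boundary of $M^\delta = \{r > \delta\}$ lies in the closure of $\{r > \delta\}$ and of $\{r \leq \delta\}$, so by continuity of $r$ it must satisfy $r(p) = \delta$. Combining the three displays gives
\be
\vol(\partial M^\delta) \leq \mathcal{H}^{n-1}(r^{-1}(\delta)) = V'(\delta) \leq \vol(\partial M)\, A_{n,H}(\delta)
\ee
for $\mathcal{L}^1$-a.e.\ $\delta$, which is the claim. The subtlest step is the identity $|\nabla r| = 1$ a.e., which I would handle by invoking Rademacher's theorem on charts together with the existence of minimizing segments from almost every interior point to $\partial M$; the coarea formula and the absolute continuity of $V$ are then standard.
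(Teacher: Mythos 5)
Your proof is correct, and the overall strategy is the same as the paper's: bound $V(\delta)=\vol(M\setminus M^\delta)$ by Theorem~\ref{thm-volumebounds}, differentiate, and identify the derivative with the area of the level set. The difference lies in the tool used for that last identification. The paper invokes Theorem~5.3 of Ambrosio--Kirchheim (slicing of metric integral currents), asserting the equality $\vol(\partial M^\delta)=\frac{d}{dt}\vol(M\setminus M^t)\big|_{t=\delta}$ for $\mathcal{L}^1$-a.e.~$\delta$. You instead use the classical coarea formula for Lipschitz functions together with the elementary observation that $|\nabla r|=1$ $\vol$-a.e.\ (1-Lipschitz bound plus a minimizing segment at each point of differentiability), which yields $V'(\delta)=\mathcal{H}^{n-1}(r^{-1}(\delta))$ a.e. You then pass from the level set to $\partial M^\delta$ via the inclusion $\partial M^\delta\subset r^{-1}(\delta)$, obtaining an inequality rather than the paper's equality. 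This buys you two things: the argument is more elementary and self-contained, avoiding the current-theoretic formalism, and it sidesteps the delicate point that $\partial M^\delta$ can be a proper subset of $r^{-1}(\delta)$, so the paper's asserted equality requires extra care while your one-sided estimate is exactly what the theorem needs. The paper's route is more in line with the SWIF framework used later in the article, but for this particular statement the two are interchangeable.
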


In Section \ref{sec-IFC} we review some basic definitions about SWIF distance such as Wenger's compactness theorem. Wenger showed that given a sequence of complete oriented Riemannian manifolds of the same dimension with $\,\vol(M_j)\le V\,$, $\,\vol(\partial M_j) \le A\,$ and $\diam(M_j) \le D$ a subsequence converges in the SWIF sense to an integral current space. See Theorem 1.2 in \cite{Wenger-compactness}, cf. Theorem 4.9 in \cite{SorWen2}.  We use Wenger's compactness theorem along with the area and volume estimates to prove convergence theorems, Theorems \ref{thm-IFcompact1}, \ref{thm-IFcompact2} and \ref{thm-IFlimits}.

\begin{thm}\label{thm-IFcompact1}
Let $D, A > 0$, $H \in \R$ and $(M^n_j,g_j)$ be a sequence of 
$n$-dimensional oriented connected Riemannian manifolds with smooth boundary. Suppose that for all $j$ the spaces $(M_j,d_j)$ are complete metric spaces that satisfy
\be 
\ric(M_j \setminus \partial M_j)\geq 0,
\ee
\be  
\vol(\partial M_j)\leq A,\,\,\, H_{\partial M_j} \leq H,
\ee
and
\be
\diam(M_j)\leq D. 
\ee
Then there is an $n$-integral current space $(W,d,T)$ and a 
subsequence $(M_{j_k}, d_{j_k},T_{j_k})$ that converges in SWIF sense
\be 
(M_{j_k},d_{j_k},T_{j_k}) \Fto (W,d,T),
\ee
where $T_j(\omega) : = \int_{M_j}\omega$.  
\end{thm}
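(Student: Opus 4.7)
The strategy is a direct application of Wenger's compactness theorem, whose hypotheses require uniform upper bounds on the volume, the boundary volume, and the diameter of the sequence $(M_j, d_j, T_j)$ viewed as integral current spaces. Two of these three bounds, namely $\vol(\partial M_j) \leq A$ and $\diam(M_j) \leq D$, are given by hypothesis. So the only real task is to produce a uniform upper bound on $\vol(M_j)$.

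For the volume bound, I would apply Theorem \ref{thm-volumebounds} to each manifold $M_j$ in the sequence. All the required hypotheses are satisfied: each $(M_j, d_j)$ is a complete metric space, $M_j$ is connected, $\ric(M_j \setminus \partial M_j) \geq 0$, $H_{\partial M_j} \leq H$, and $\vol(\partial M_j) \leq A < \infty$. Since $\diam(M_j) \leq D$, Theorem \ref{thm-volumebounds} yields
\be
\vol(M_j) \leq \vol(\partial M_j) \int_0^{\tilde{D}} A_{n,H}(t)\, dt \leq A \int_0^{\tilde{D}} A_{n,H}(t)\, dt =: V,
\ee
where $\tilde{D} = D$ when $H \geq 0$ and $\tilde{D} = \min\{D, -(n-1)/H\}$ when $H < 0$. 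In either case $\tilde{D}$ is finite and depends only on $D, H, n$, and the continuous function $A_{n,H}$ is bounded on $[0, \tilde{D}]$, so $V$ is a finite constant independent of $j$.

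With the uniform bounds $\vol(M_j) \leq V$, $\vol(\partial M_j) \leq A$, and $\diam(M_j) \leq D$ in hand, we may appeal to Wenger's compactness theorem (Theorem 1.2 in \cite{Wenger-compactness}, cf. Theorem 4.9 in \cite{SorWen2}) to extract a subsequence $(M_{j_k}, d_{j_k}, T_{j_k})$ converging in the SWIF sense to an $n$-integral current space $(W, d, T)$, as desired.

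There is no substantial obstacle here once Theorems \ref{thm-Blaplaciancomparison}--\ref{thm-areabounds} are in place; the heavy lifting has already been done in Section 2 and by Wenger. The only minor verification is that each oriented Riemannian manifold $M_j$ with smooth boundary and finite volume gives rise to a bona fide integral current space via $T_j(\omega) = \int_{M_j} \omega$, which is standard (finite mass follows from $\vol(M_j) < \infty$, and the boundary current is likewise integral because $\vol(\partial M_j) < \infty$).
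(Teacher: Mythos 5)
Your proof is correct and follows essentially the same route as the paper: the paper also obtains the uniform volume bound from Theorem \ref{thm-volumebounds} (inside the proof of the more general Theorem \ref{thm-IFdeltaconvergence}, specialized to $\delta=0$) and then invokes Wenger's compactness theorem. Nothing is missing.
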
 

The necessity of diameter and mean curvature uniform bounds in Theorem \ref{thm-IFcompact1} can be seen in Example \ref{ex-diam} and Example \ref{ex-jfold}, respectively. 

Myers proved that for a complete Riemannian manifold with Ricci curvature bounded from below geodesics past certain distance must have conjugate points. Thus, the diameter of the manifold is bounded below by this distance. Li and Li-Nguyen in \cite{LiMartin} and \cite{Li-Nguyen}, respectively, proved that if $(M^n, g)$ is a complete connected Riemannian manifold with smooth boundary such that $\ric(M \setminus \partial M) \geq 0$ and $H_{\partial M} \leq H < 0$  then $r \leq - (n-1)/H$. Hence, $\diam(M)$ can be bounded in terms of $-(n-1)/H$ and $\diam(\partial M)$. See Remark \ref{rmrk-boundedDiam}. We get the following compactness theorem.

\begin{thm}\label{thm-IFcompact2}
Let $D', A> 0$ and $(M^n_j,g_j)$ be a sequence of 
n-dimensional oriented connected Riemannian manifolds with smooth boundary. Suppose that for all $j$ the spaces $(M_j,d_j)$ are complete metric spaces that satisfy
\be 
\ric(M_j \setminus \partial M_j)\geq 0,
\ee
\be  
\vol(\partial M_j)\leq A,\,\,\, H_{\partial M_j} \leq H <0,
\ee
and
\be
\diam(\partial M_j)\leq D'. 
\ee
Then there is a subsequence $(M_{j_k}, d_{j_k},T_{j_k})$ and an
$n$-integral current space $(W,d,T)$ such that
\be 
(M_{j_k}, d_{j_k},T_{j_k}) \Fto (W,d,T).
\ee
\end{thm}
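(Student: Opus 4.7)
The plan is to deduce Theorem \ref{thm-IFcompact2} directly from Theorem \ref{thm-IFcompact1} by producing a uniform diameter bound on $M_j$ out of the mean curvature hypothesis. The key input, already flagged in the introduction and Remark \ref{rmrk-boundedDiam}, is the result of Li \cite{LiMartin} and Li-Nguyen \cite{Li-Nguyen}: on a complete connected Riemannian manifold with smooth boundary, $\ric \geq 0$ together with $H_{\partial M} \leq H < 0$ forces
\begin{equation*}
r(p) = d(p, \partial M) \leq -\frac{n-1}{H} \quad \text{for every } p \in M.
\end{equation*}

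Given this bound, I would extract the uniform diameter estimate by a short triangle inequality argument. Fix $p, p' \in M_j$. Since $(M_j, d_j)$ is complete and $\partial M_j$ is closed, there exist $q, q' \in \partial M_j$ realizing $d_j(p,q) = r(p)$ and $d_j(p',q') = r(p')$, each at most $-(n-1)/H$ by the Li-Nguyen estimate. Hence
\begin{equation*}
d_j(p, p') \leq d_j(p, q) + d_j(q, q') + d_j(q', p') \leq -\frac{2(n-1)}{H} + D',
\end{equation*}
and taking the supremum over $p, p' \in M_j$ gives
\begin{equation*}
\diam(M_j) \leq D := -\frac{2(n-1)}{H} + D',
\end{equation*}
independent of $j$.

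With this uniform diameter bound in hand, every hypothesis of Theorem \ref{thm-IFcompact1} is satisfied by the sequence $(M_j, g_j)$: the Ricci lower bound, the area bound $\vol(\partial M_j) \leq A$, the mean curvature bound $H_{\partial M_j} \leq H$, and $\diam(M_j) \leq D$. Applying that theorem extracts a SWIF-convergent subsequence $(M_{j_k}, d_{j_k}, T_{j_k}) \Fto (W, d, T)$ with $(W,d,T)$ an $n$-integral current space, which is exactly the desired conclusion.

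The only subtlety is the interpretation of $\diam(\partial M_j)$. Since $\partial M_j$ is taken as a metric subspace of $(M_j, d_j)$, the distance $d_j(q, q')$ appearing in the triangle inequality coincides with the restricted distance bounded by $D'$, so the argument goes through verbatim; if one instead used the induced intrinsic metric on $\partial M_j$, intrinsic distance only exceeds ambient distance, which makes the estimate easier. Thus Theorem \ref{thm-IFcompact2} is a direct corollary of Theorem \ref{thm-IFcompact1} via the Li/Li-Nguyen estimate, and no genuine new obstacle arises.
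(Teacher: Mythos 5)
Your argument reproduces the paper's implicit proof: Remark \ref{rmrk-boundedDiam} applies the Li/Li-Nguyen bound $r \le -(n-1)/H$ together with the boundary diameter bound $D'$ to obtain $\diam(M_j)\le D'-2(n-1)/H$, after which Theorem \ref{thm-IFcompact2} follows as a corollary of Theorem \ref{thm-IFcompact1}. The only minor deviation is cosmetic: you realize the distance $r(p)$ by a nearest boundary point $q$ for arbitrary $p$, whereas the paper's remark uses the foot-point map $\pi$ (defined off the cut locus and extended to the diameter by a limiting argument); both give the same bound. Your aside about the intrinsic versus restricted metric on $\partial M_j$ is consistent with the paper's convention, which treats $\diam(\partial M_j)$ intrinsically and uses $d_{M_j}|_{\partial M_j} \le d_{\partial M_j}$.
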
 

In Example \ref{ex-diam} we describe a sequence that satisfies all the hypotheses 
of Theorem \ref{thm-IFcompact2}, except that $H_{\partial M_j}=0$ for all $j$. This sequence does not converge in SWIF sense. Hence a uniform negative bound on the mean curvature is needed.  

When using GH distance the following can occur. See Example 4.10 of \cite{Persor-2013} and Example \ref{ex-IlmanemSeq}. There exists a sequence of oriented connected Riemannian manifolds with smooth boundary that satisfy $(M_j,d_j)$ is complete as metric space, $\ric(M_j \setminus \partial M_j)\geq 0$, $\vol(\partial M_j)\leq A,\,\,\, H_{\partial M_j} \leq H$ and $\diam(M_j)\leq D$ such that
\be 
(M_j, d_j) \GHto (X,d_X)
\ee 
and for every decreasing sequence $\delta_i \to 0$ 
\be 
(M_{j_k}^{\delta_i}, d_{j_k}) \GHto (Y_{\delta_i},d_{Y_{\delta_i}}),
\ee
but $(Y_{\delta_i}, d_{Y_{\delta_i}})$ does not converge in GH sense to $(X,d_X)$.
In the theorem below we see that this situation does not happen if we replace GH distace by SWIF distance.

\begin{thm}\label{thm-IFlimits}
Let $D, A > 0$, $H \in \R$ and $(M^n_j,g_j)$ be a sequence of 
n-dimensional complete oriented connected Riemannian manifolds with smooth boundary.  Suppose that for all $j$ the spaces $(M_j,d_j)$ are complete metric spaces that satisfy
\be 
\ric(M_j \setminus \partial M_j)\geq 0,
\ee
\be  
\vol(\partial M_j)\leq A,\,\,\, H_{\partial M_j} \leq H,
\ee
and
\be
\diam(M_j)\leq D. 
\ee
Suppose that there exist an integral current space $(W,d,T)$, 
a non increasing sequence $\delta_i \to 0$ and integral current spaces
$(W_{\delta_i},d_{W_{\delta_i}},T_{\delta_i})$ such that 

\be 
(M_j,d_j,T_j) \Fto (W,d,T)
\ee
and for all $i$
\be
(M_{j_k}^{\delta_i}, d_{j_k},T_{j_k}^{\delta_i}) \Fto (W_{\delta_i},d_{W_{\delta_i}},T_{\delta_i}).
\ee
Then we have
\be 
(W_{\delta_i},d_{W_{\delta_i}},T_{\delta_i}) \Fto (W,d,T). 
\ee
\end{thm}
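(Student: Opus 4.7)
The strategy is to produce a uniform-in-$j$ bound on the intrinsic flat distance between $(M_j, d_j, T_j)$ and its subspace $(M_j^{\delta_i}, d_j, T_j^{\delta_i})$, and then transfer this bound to the limits via the triangle inequality.

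Since the inclusion $M_j^{\delta_i} \hookrightarrow M_j$ is an isometric embedding (both sides carry the same metric $d_j$), and $(M_j, d_j)$ is a complete metric space by hypothesis, we may use $M_j$ as the common ambient space in the definition of intrinsic flat distance. The current $T_j - T_j^{\delta_i}$ is precisely integration over $M_j \setminus M_j^{\delta_i}$, so combining the elementary bound $\mathcal{F}(S)\le \mass(S)$ with a direct mass computation yields
\begin{equation*}
d_{\mathcal{F}}\bigl((M_j^{\delta_i}, d_j, T_j^{\delta_i}),\, (M_j, d_j, T_j)\bigr) \;\leq\; \mass(T_j - T_j^{\delta_i}) \;=\; \vol(M_j \setminus M_j^{\delta_i}).
\end{equation*}
Applying Theorem \ref{thm-volumebounds} with $\delta_2 = 0$ and $\delta_1 = \delta_i$, together with $\vol(\partial M_j) \leq A$ and $H_{\partial M_j} \leq H$, gives the uniform bound
\begin{equation*}
\vol(M_j \setminus M_j^{\delta_i}) \;\leq\; A \int_{0}^{\delta_i} A_{n,H}(t)\, dt,
\end{equation*}
which tends to zero as $\delta_i \to 0$, independently of $j$.

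To finish, given $\vare > 0$, I would choose $I$ so large that $A \int_{0}^{\delta_i} A_{n,H}(t)\, dt < \vare/3$ for all $i \geq I$. Fix such an $i$. The hypothesis $(M_j, d_j, T_j) \Fto (W, d, T)$ passes to every subsequence, so along the subsequence $j_k$ provided by the convergence $(M_{j_k}^{\delta_i}, d_{j_k}, T_{j_k}^{\delta_i}) \Fto (W_{\delta_i}, d_{W_{\delta_i}}, T_{\delta_i})$ we still have $(M_{j_k}, d_{j_k}, T_{j_k}) \Fto (W, d, T)$. Picking $k$ sufficiently large, both intrinsic flat distances
\begin{equation*}
d_{\mathcal{F}}\bigl((M_{j_k}^{\delta_i}, d_{j_k}, T_{j_k}^{\delta_i}),\, (W_{\delta_i}, d_{W_{\delta_i}}, T_{\delta_i})\bigr) \quad\text{and}\quad d_{\mathcal{F}}\bigl((M_{j_k}, d_{j_k}, T_{j_k}),\, (W, d, T)\bigr)
\end{equation*}
fall below $\vare/3$. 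Combining with the bridging estimate via the triangle inequality then yields $d_{\mathcal{F}}((W_{\delta_i}, d_{W_{\delta_i}}, T_{\delta_i}),\, (W, d, T)) < \vare$ for all $i \geq I$, which is the desired convergence.

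The main quantitative ingredient is Theorem \ref{thm-volumebounds}, which makes the bridging step uniform in $j$; the diameter and Ricci curvature hypotheses only enter indirectly, by guaranteeing that the spaces in question are integral current spaces and that the flat limits exist. I expect no serious obstacle beyond verifying that the flat norm in $M_j$ genuinely dominates the intrinsic flat distance, which is immediate once one observes that $M_j$ is itself a complete metric space into which both spaces embed isometrically via the inclusion map.
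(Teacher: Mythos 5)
Your proposal is correct and follows essentially the same route as the paper: the triangle inequality through $(M_{j_k}^{\delta_i}, d_{j_k}, T_{j_k}^{\delta_i})$ and $(M_{j_k}, d_{j_k}, T_{j_k})$, with the bridging term controlled by $\vol(M_{j_k}\setminus M_{j_k}^{\delta_i})$ via Lemma \ref{lem-IFdforsubset} (which you re-derive inline) and Theorem \ref{thm-volumebounds}. The only difference is presentational — you spell out the $\vare/3$ bookkeeping that the paper compresses into a double limit.
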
 

In Subsection \ref{ssec-examples} we provide examples of sequences of manifolds with boundary. Example \ref{ex-IlmanemSeq} defines a sequence (as in \cite{SorWen2}) that  converges in SWIF sense but not in GH sense.  Example \ref{ex-jfold} defines a sequence  (as in \cite{SorAA}) that shows the necessity of a uniform bound of the mean curvature required in Theorem \ref{thm-IFcompact1}. Example \ref{ex-diam} shows the necessity of a uniform bound of the diameter required in Theorem \ref{thm-IFcompact1} and that equality holds in both volume and area estimates given in Theorem \ref{thm-volumebounds} and Theorem \ref{thm-areabounds}. Moreover, this example shows that the assumption $H<0$ in Theorem \ref{thm-IFcompact2} is needed.

I would like to thank my doctoral advisor, Prof Sormani, for introducing me to 
the notion of intrinsic flat convergence and helping with expository aspects of this
paper. I would like to thank my fellow participants in the 
CUNY Metric Geometry Reading Seminar: Jacobus Portegies, Sajjad Lakzian and Kenneth Knox for their amazing presentations.  I would like to thank Professors Anderson, Khuri, Lawson and LeBrun for their excellent courses and their support. I would like to thank Panagiotis Gianniotis and Pedro Solorzano with whom I have discussed part of this work.   I would like to thank Professors Wilkins, Plaut and Searle for providing me with the opportunity to present this work in Tennessee.

\section{Volume, Area and Diameter Bounds} \label{sect-VolAreaDiam}

In this section we see that the function $r$ is differentiable almost everywhere by showing that it is a Lipschtiz map and invoking Rademacher's theorem. We also give a proof that shows that $r$ is bounded when $H_{\partial M} \leq H <0$. This result is used  to bound the diameter of $M$ in terms of the diameter of $\partial M$ and $H$. We also prove Theorems \ref{thm-Blaplaciancomparison}, \ref{thm-volumebounds} and \ref{thm-areabounds}. 

\begin{lem}\label{thm-distancefuntion}
Let $M^n$ be a connected Riemannian manifold with boundary such that $(M,d)$ is complete as metric space. Then $r=d(\partial M, \,\,)$ is a Lipschitz function with $\Lip(r)=1$. 
\end{lem}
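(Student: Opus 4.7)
The plan is to establish two inequalities. First I would prove the upper bound $|r(p)-r(q)| \le d(p,q)$ for all $p, q \in M$, which shows $r$ is Lipschitz with constant at most $1$. Then I would exhibit a pair of points for which the ratio $|r(p)-r(q)|/d(p,q)$ can be made arbitrarily close to $1$, confirming that the optimal Lipschitz constant equals $1$.

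For the upper bound I would argue directly from the definition of $r$ as an infimum. Fix $p, q \in M$ and $\varepsilon > 0$. By definition of $r(q)$, there exists $z \in \partial M$ with $d(q,z) < r(q)+\varepsilon$. The metric triangle inequality applied in the length space $(M,d)$ then gives
\be
r(p) \;\le\; d(p,z) \;\le\; d(p,q) + d(q,z) \;<\; d(p,q) + r(q) + \varepsilon.
\ee
Letting $\varepsilon \to 0$ yields $r(p)-r(q) \le d(p,q)$, and swapping the roles of $p$ and $q$ gives the matching inequality. Thus $|r(p)-r(q)| \le d(p,q)$. Note that this step does not actually require completeness of $(M,d)$ nor smoothness of the boundary; only that $\partial M$ is a nonempty subset and that $d$ satisfies the triangle inequality.

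For sharpness I would use the existence of a tubular (Fermi) neighborhood of $\partial M$. Pick any point $q_0 \in \partial M$ and let $\nu$ denote the inward-pointing unit normal at $q_0$. Let $\gamma:[0,\varepsilon_0]\to M$ be the geodesic with $\gamma(0)=q_0$ and $\gamma'(0)=\nu$. Standard Fermi-coordinate theory shows that for $\varepsilon_0>0$ sufficiently small, the curve $\gamma$ realizes the distance from each of its points to $\partial M$, so that $r(\gamma(t)) = t$ for all $t\in[0,\varepsilon_0]$. Then for any $0\le t_1 < t_2 \le \varepsilon_0$,
\be
|r(\gamma(t_2)) - r(\gamma(t_1))| \;=\; t_2-t_1 \;=\; \length(\gamma|_{[t_1,t_2]}) \;\ge\; d(\gamma(t_1),\gamma(t_2)).
\ee
Combined with the upper bound this forces $d(\gamma(t_1),\gamma(t_2)) = t_2-t_1$ and shows $\Lip(r) \ge 1$.

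The main obstacle, such as it is, lies in the sharpness half: one must justify that $r(\gamma(t)) = t$ on a small inward normal geodesic, i.e.\ that $\gamma$ is minimizing to $\partial M$ for short time. This is the one place smoothness of $\partial M$ enters, via the tubular neighborhood theorem; it rules out pathological foot-point behavior near the boundary. The upper bound, which is all that is actually used in the Laplace and volume comparison arguments to follow, is a routine triangle inequality and does not require any of the Ricci or mean curvature hypotheses of Theorems \ref{thm-Blaplaciancomparison}, \ref{thm-volumebounds} or \ref{thm-areabounds}.
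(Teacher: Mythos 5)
Your upper-bound computation is the same triangle-inequality argument as the paper's, with the minor variation that you work with an $\varepsilon$-approximate nearest point of $\partial M$ rather than an attained one; the paper chooses a $p'\in\partial M$ with $r(p)=d(p',p)$, which is where completeness of $(M,d)$ is used, whereas your version bypasses that step. You additionally supply the sharpness half $\Lip(r)\ge 1$ via a short inward normal geodesic in a Fermi neighborhood; the paper's proof only establishes $|r(p)-r(q)|\le d(p,q)$ and then asserts $\Lip(r)=1$ without proving the reverse inequality, so on this point your proof is actually more complete than the paper's.
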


\begin{proof}
Let $p,q\in M$. There exists $p'\in \partial M$ such that
$r(p)=d(p',p)$. Then, 
\be
r(q)-r(p) \leq d(p', q)-d(p',p) \leq d(q,p).
\ee
In the same way it is proven that $r(p)-r(q) \leq d(q,p)$.
Thus, $r$ is a Lipschitz function with $\Lip(r)=1$. 
\end{proof}

The composition of  $r$ with a normal coordinate on a strongly convex ball
is a Lipschitz function. Hence, $r$ is differentiable except for a zero measure set. 

\subsection{Diameter Bounds for Manifolds with Negative Mean Curvature}\label{subsec-DiamBounds}

We give the definitions of a focal point and a cut point of $\partial M$.
Then define the function $\pi: M\setminus \cut(\partial M) \to \partial M$ which assigns 
to each point $p$ in the domain the unique point in the boundary that equals $r(p)$.
Then we prove the theorem of Li and Li-Nguyen, \cite{LiMartin} and \cite{Li-Nguyen}, respectively, that gives an upper bound on $r$ when $H_{\partial M} \leq H < 0$. 
With that bound we get an upper estimate of the diameter of $M$ in Remark \ref{rmrk-boundedDiam}. 

\begin{defn}
$q \in M$ is a focal point of $\partial M$ if there exists a geodesic
$\gamma:[0,a] \to M$ such that 
$\gamma(0) \in \partial M$, 
$\gamma'(0) \in T_{\gamma(0)} \partial M^\perp$ and 
$\gamma(a)=q$, and a Jacobi field $J$ along $\gamma$ that vanishes at $b$ and satisfies $J(0) \in T_{\gamma(0)}\partial M$ and $J'(0)+S_{\gamma'(0)}(j(0)) \in  T_{\gamma(0)}\partial M^\perp$.

A cut point of $\partial M$ is either a first focal point or a point with two geodesics back to the boundary of the same length achieving the
distance to the boundary. Denote by $\cut(\partial M)$ the set of cut points of $\partial M$.
\end{defn}

\begin{rmrk}
If $(M,g)$ is a connected Riemannian manifold with boundary with $(M,d)$ complete as a metric space, then geodecis normal to the boundary are not minimizing past a focal point.
See Section 11.4, Corollary 1 of Theorem 5 in \cite{B-C}.
\end{rmrk}

As a consequence the following function is well defined.

\begin{defn}
Let $\pi: M\setminus \cut(\partial M) \to \partial M$ be the function 
that assigns to $p \in M\setminus \cut(\partial M)$ the only point $\pi(p) \in \partial M$ that satisfies $r(q)=d(\pi(q),q)$.
\end{defn}

\begin{lem}\label{lem-focalpoints}[Li, Li-Nguyen]
Let $M^n$ be an $n$-dimensional and connected Riemannian manifold with boundary such that $\ric(M \setminus \partial M) \geq 0$  and $(M,d)$ is a complete metric space. Suppose that $p' \in \partial M$ has $H_{\partial M}(p') < 0$, then the geodesic that starts at $p'$ with initial vector the unitary normal inward vector stops minimizing after time 
$t_0 > -\frac{n-1}{H_{\partial M}(p')}$.
\end{lem}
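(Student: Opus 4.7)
The plan is to run the classical Riccati/Jacobi-field comparison along the normal geodesic $\gamma(t) := \exp_{p'}(t\nu)$. I would track the Jacobi-field matrix $A(t)$ coming from boundary variations, derive a scalar Riccati inequality for $f(t)=\operatorname{tr}(A'A^{-1})$ using $\operatorname{Ric}\ge 0$ and Cauchy--Schwarz, and integrate it to show $f$ must blow up by $t=-(n-1)/H_{\partial M}(p')$. This produces a focal point of $\partial M$ along $\gamma$ by that time, and the Bishop--Crittenden remark cited just above the lemma then prevents $\gamma$ from minimizing the distance to $\partial M$ past that point.

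Concretely, I would fix an orthonormal basis $E_1,\dots,E_{n-1}$ of $T_{p'}\partial M$, parallel transport it along $\gamma$, and consider the boundary Jacobi fields $J_i(t)$ produced by the variation $\gamma_s(t)=\exp_{\sigma(s)}(t\nu(\sigma(s)))$ with $\sigma(0)=p'$, $\sigma'(0)=E_i$. Interchanging $\nabla_s$ and $\nabla_t$ gives $J_i(0)=E_i$ and $J_i'(0)=\nabla_{E_i}\nu=S(E_i)$, where $S$ is the shape operator of $\partial M$ at $p'$; with the paper's conventions (consistent with Theorem \ref{thm-Blaplaciancomparison}), $\operatorname{tr} S = H_{\partial M}(p')$. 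Writing the $J_i$ as columns of $A(t)$ in the parallel frame, $A(0)=I$ and $A'(0)=S$. On the maximal interval $[0,T)$ where $A(t)$ is invertible (i.e., before the first focal point), set $U(t):=A'(t)A(t)^{-1}$. The Jacobi equation $A''+R_\gamma A=0$ yields the matrix Riccati equation
\[
U'(t)+U(t)^{2}+R_\gamma(t)=0,
\]
with $R_\gamma(V)=R(V,\gamma'(t))\gamma'(t)$. The Wronskian $A^{T}A'-(A')^{T}A$ is constant along $\gamma$ and vanishes at $t=0$ because $S$ is symmetric, so $U(t)$ remains symmetric throughout $[0,T)$. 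Taking the trace, and invoking $\operatorname{Ric}(\gamma',\gamma')\ge 0$ together with the Cauchy--Schwarz bound $\operatorname{tr}(U^{2})\ge(\operatorname{tr}U)^{2}/(n-1)$ valid for symmetric $U$, one obtains the scalar Riccati inequality
\[
f'(t)\le -\frac{f(t)^{2}}{n-1},\qquad f(0)=\operatorname{tr} S=H_{\partial M}(p')<0.
\]

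To finish I would integrate this ODE inequality. Since $f(0)<0$ and $f'<0$, $f$ is strictly negative wherever it is finite, so $(1/f)'=-f'/f^{2}\ge 1/(n-1)$ and hence
\[
\frac{1}{f(t)} \ge \frac{1}{H_{\partial M}(p')}+\frac{t}{n-1}.
\]
The right-hand side crosses $0$ at $t=-(n-1)/H_{\partial M}(p')>0$, whereas the left-hand side must stay negative as long as $f$ remains finite; therefore $f$ blows up to $-\infty$ at some $t_{0}\le -(n-1)/H_{\partial M}(p')$, which is precisely the assertion that $A(t_{0})$ is singular, i.e., a focal point of $\partial M$ along $\gamma$. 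By the cited consequence of the index form, $\gamma$ stops minimizing the distance to $\partial M$ no later than $t_{0}\le -(n-1)/H_{\partial M}(p')$.

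The main obstacle is really the sign/convention bookkeeping: one must verify that, with the paper's definition of $H_{\partial M}$ (the one making Theorem \ref{thm-Blaplaciancomparison} tight for a round ball), $\operatorname{tr} U(0)$ equals $H_{\partial M}(p')$ and hence is negative by hypothesis, which is what lets the Riccati inequality drive $f$ to $-\infty$ in finite time. The symmetry of $U$ via the Wronskian, the Cauchy--Schwarz step, and the scalar ODE comparison are then standard.
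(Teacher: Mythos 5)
Your proposal is correct, and it takes a genuinely different route from the paper's own proof. The paper proves Lemma~\ref{lem-focalpoints} directly from the second variation: it takes the test fields $V_i(t)=(t_0-t)E_i(t)$ (rescaled parallel fields tangent to $\partial M$ at $t=0$ and vanishing at $t_0$), computes the index forms $I_{t_0}(V_i)$ with the $\inner{S_{\gamma'}V_i,V_i}(0)$ boundary term, sums over $i$, and uses $\ric\ge 0$ to bound the total by $t_0\bigl((n-1)+t_0 H_{\partial M}(p')\bigr)$, which is negative once $t_0>-(n-1)/H_{\partial M}(p')$; hence some $I_{t_0}(V_i)<0$ and $\gamma$ fails to minimize beyond $t_0$. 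You instead run the matrix Riccati/Jacobi comparison to force a focal point of $\partial M$ along $\gamma$ no later than $-(n-1)/H_{\partial M}(p')$, and then invoke the Bishop--Crittenden remark stated just before the lemma. Both arguments are standard and correct; your sign bookkeeping for $\operatorname{tr}S=H_{\partial M}$ is consistent with the paper's conventions (checked against the round-ball case in Theorem~\ref{thm-Blaplaciancomparison} and the $H_{N_{\delta,\alpha}}$ computation). The index-form route is more self-contained and does not pass through the focal-point machinery; your Riccati route is essentially the same computation the paper uses separately to prove Theorem~\ref{thm-laplaciancomparison} (Bochner plus the Cauchy--Schwarz bound $|\hess r|^2\ge(\Delta r)^2/(n-1)$ gives the identical scalar inequality $f'+f^2/(n-1)\le 0$), so your approach would in principle let both results drop out of a single Riccati argument, at the cost of importing the focal-point characterization.
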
 

\begin{proof}
Let $E_i$ be an orthonormal basis of parallel fields along $\gamma$ such that $E_n=\gamma'$. Let $V_i(t)=(t_0-t)E_i(t)$ be vector fields along $\gamma$. Then,
\begin{align} 
I_{t_0}(V_i) & =  \int_{0}^{t_0} \lbrace \inner{V_i',V_i'}-\inner{R(\gamma',V_i)\gamma',V_i} \rbrace(t)dt
+\inner{S_{\gamma'}V_i,V_i}(0)-\inner{S_{\gamma'}V_i,V_i}(t_0)\\
& = \int_{0}^{t_0} \lbrace 1 -(t_0-t)^2\inner{R(\gamma',E_i)\gamma',E_i} \rbrace(t)dt
+t_0^2\inner{S_{\gamma'}E_i,E_i}(0).
\end{align}

Now we add $I(V_i)$ and use the fact that $\ric(M \setminus \partial M) \geq 0$
\begin{align} 
\sum_{i=1}^{n-1} I_{t_0}(V_i) & = 
 \int_{0}^{t_0} \lbrace (n-1) -(t_0-t)^2\sum_{i=1}^{n-1} \inner{R(\gamma',E_i)\gamma',E_i} \rbrace(t)dt
+t_0^2\sum_{i=1}^{n-1} \inner{S_{\gamma'}E_i,E_i}(0)\\
& \leq t_0(n-1)+t_0^2H_{\partial M}(p')\\
& = t_0((n-1)+t_0H_{\partial M}(p')).
\end{align}
Assuming that $t_0 > -\frac{n-1}{H_{\partial M}(p')}$ and since $H_{\partial M}(p') < 0$, we get
$(n-1)+H_{\partial M}(p')t_0 < 0$. Then $\sum_{i=1}^{n-1} I_{t_0}(V_i) < 0$.
Thus, there is  $i$ for which $I_{t_0}(V_i) < 0$. Hence, $\gamma$ is not minimizing.
\end{proof}

\begin{rmrk}\label{rmrk-boundedDiam}
The lemma implies that if $H_{\partial M} \leq H <0$  and $\diam(\partial M)\leq D'$ then $\diam(M)$ is bounded. For $p,q \in M$
\begin{align} 
d_M(p,q) & \leq d_M(p,\pi(p))+d_M(\pi(p),\pi(q))+d_M(\pi(q),q)\\
& \leq -\frac{n-1}{H} + d_{\partial M}(\pi(p),\pi(q)) -\frac{n-1}{H} \\
& = D' -2 \frac{n-1}{H},
\end{align}
where we use that the intrinsic metric $d_{\partial M}$ on $\partial M$ is greater or equal than the 
restricted metric $d_M|_{\partial M}$.
\end{rmrk}

\subsection{Laplacian Comparison Theorems}\label{subsec-Laplacian}

For manifolds with no boundary two Laplacian comparison theorems for the function distance to a point were proven. The first was proven only for points outside the cut locus of the point. Then it was extended to the barrier sense. See \cite{Petersen-text} and \cite{B-C}. We also prove a Laplacian comparison theorem for $r$ for points outside $\cut(\partial M)$, Theorem \ref{thm-Blaplaciancomparison}. We define upper barrier function and laplacian comparison in the barrier sense (see \cite{Cheeger-Criticalp}). Then we prove a Laplacian comparison theorem in the barrier sense, Theorem \ref{thm-Blaplaciancomparison}.

\begin{thm}\label{thm-laplaciancomparison}
Let $M^n$  be an $n$-dimensional connected Riemannian manifold with boundary with $(M,d)$ complete as metric space, $\ric(M \setminus \partial M) \geq 0$.
Then for all $p \in M  \setminus \cut (\partial M) $
\be 
\Delta r(p) \leq \frac{(n-1)H_{\partial M}(\pi(p))}{H_{\partial M}(\pi(p))r(p)+n-1}
\ee
\end{thm}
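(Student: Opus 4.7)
The plan is to prove this theorem by the classical Jacobi-field / Riccati equation method, adapted to the boundary-distance setting. Fix $p \in M \setminus \cut(\partial M)$, let $p' = \pi(p)$, let $\nu$ be the unit inward normal at $p'$, and let $\gamma\colon[0,r(p)] \to M$ be the unique minimizing normal geodesic from $p'$ to $p$ with $\gamma'(0) = \nu$. Because $p$ is not a cut point of $\partial M$, Fermi coordinates based on $\partial M$ give a diffeomorphism on a neighborhood of $\gamma$, so $r$ is smooth there and $\nabla r = \gamma'$ along $\gamma$. Set $h(t) := \Delta r(\gamma(t))$; equivalently $h(t)$ is the trace of the shape operator $U(t) := \hess r(\gamma(t))|_{\gamma'(t)^\perp}$ of the level hypersurface $r^{-1}(t)$, with respect to the normal $\gamma'(t)$.

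Next I would derive the Riccati equation satisfied by $U$. Differentiating $\hess r$ along $\gamma$ and using $\nabla_{\gamma'}\nabla r = 0$ (since $\gamma$ is a unit-speed geodesic and $|\nabla r| = 1$ where $r$ is smooth) yields
\be
U'(t) + U(t)^2 + R_{\gamma'}(t) = 0,
\ee
where $R_{\gamma'}$ is the symmetric endomorphism $X \mapsto R(X,\gamma')\gamma'$ on $\gamma'^\perp$. Taking the trace gives
\be
h'(t) + \tr(U(t)^2) + \ric(\gamma'(t),\gamma'(t)) = 0.
\ee
Since $U(t)$ is symmetric on an $(n-1)$-dimensional space, Cauchy--Schwarz yields $\tr(U^2) \geq h^2/(n-1)$, and the hypothesis $\ric \geq 0$ on $M \setminus \partial M$ then gives the one-dimensional differential inequality
\be
h'(t) \leq -\frac{h(t)^2}{n-1}.
\ee

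For the initial condition, I would argue that $\lim_{t\to 0^+} h(t) = H_{\partial M}(p')$: a standard first-variation computation using Jacobi fields $J_i$ along $\gamma$ with $J_i(0)$ an orthonormal basis of $T_{p'}\partial M$ and $J_i'(0) = \nabla_{J_i(0)}\nu$ identifies $U(0)$ with the shape operator of $\partial M$ at $p'$ with respect to $\nu$, whose trace is $H_{\partial M}(p')$ in the paper's convention (consistent with Lemma \ref{lem-focalpoints} and the ball example, where $H_{\partial M} = -(n-1)/R$). Setting $H := H_{\partial M}(p')$ and $\phi(t) := (n-1)H/(Ht + n-1)$, a direct check shows $\phi' = -\phi^2/(n-1)$ with $\phi(0) = H$, so $\phi$ is the solution of the equality case. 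A standard ODE comparison argument, writing $\psi := h - \phi$ and noting
\be
\psi'(t) \leq -\frac{(h(t)+\phi(t))}{n-1}\,\psi(t),
\ee
combined with $\psi(0) \leq 0$ and a Gr\"onwall-type integrating factor, forces $\psi(t) \leq 0$ for all $t$ in the domain. Evaluating at $t = r(p)$ produces the desired bound.

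The main obstacle is handling the case $H < 0$, where the denominator $Ht + n-1$ could in principle vanish; I would use the Li / Li--Nguyen lemma (Lemma \ref{lem-focalpoints}) to guarantee that $\gamma$ ceases to be minimizing before $t = -(n-1)/H$, so $Ht + n-1 > 0$ on all of $[0, r(p)]$ and the comparison $\phi$ is defined throughout. A secondary subtlety is justifying the smooth Fermi-coordinate setup and the identification $h(0) = H_{\partial M}(p')$ as a boundary limit; both are standard once one notes that $p \notin \cut(\partial M)$ excludes focal points, so the Jacobi matrix $A(t)$ with $A(0) = I$, $A'(0) = $ shape operator remains invertible on $[0, r(p)]$.
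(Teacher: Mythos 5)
Your proof is correct and follows the same Bochner/Riccati strategy as the paper: derive the scalar inequality $(\Delta r)'\le -(\Delta r)^2/(n-1)$ along the minimizing normal geodesic $\gamma$ (the paper does this via the Bochner--Weitzenb\"ock identity, you via the traced Riccati equation for $\hess r$ — these are the same computation), identify $\Delta r(\gamma(0))=H_{\partial M}(\pi(p))$, and compare with the explicit solution $\phi(t)=(n-1)H/(Ht+n-1)$ of the equality ODE, using Lemma~\ref{lem-focalpoints} to keep the denominator positive when $H<0$. The one point where you genuinely improve on the paper's exposition is the final comparison step: the paper integrates $-h'/h^2\ge 1/(n-1)$ and then inverts, which requires a separate case analysis for when $\Delta r$ vanishes along $\gamma$; your integrating-factor (Gr\"onwall) argument with $\psi=h-\phi$, $\psi'\le -\tfrac{h+\phi}{n-1}\psi$, $\psi(0)=0$, dispenses with that case split and never divides by $\Delta r$.
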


\begin{proof}
For points in $\partial M$ the result is true by hypothesis.
For points in $M \setminus (\cut (\partial M) \cup \partial M)$ we use
Bochner-Weitzenbock's formula 
\be 
|\hess r|^2 + \tfrac{\partial}{\partial r}(\Delta r)+\ric(\nabla r, \nabla r)=0.
\ee
Since $|\hess r|^2 \geq  \tfrac{(\Delta r)^2}{n-1}$ and $\ric(M) \geq 0$,
\be \label{eq-bochner}  
0 \geq \tfrac{(\Delta r)^2}{n-1}
+ \tfrac{\partial }{\partial r}(\Delta r).
\ee

Take $p \in M  \setminus (\cut (\partial M) \cup \partial M)$. Let $\gamma$ be the minimizing geodesic from $\pi(p)$ to $p$. First we assume that $\Delta r(\gamma (t)) \neq 0$ later we prove that it was not necessary. We arrange terms in (\ref{eq-bochner}) and integrate along $\gamma$

\be  
-\int_{0}^{r(p)} \frac{ \tfrac{\partial}{\partial r}(\Delta r)}{(\Delta r)^2} \geq \int_{0}^{r(p)}\frac{1}{n-1},
\ee
get
\begin{align}
\frac{1}{\Delta r(p)} & \geq \frac{r(p)}{n-1} + \frac{1}{\Delta r(\pi(p))}\\
& = \frac{r(p)}{n-1} + \frac{1}{H_{\partial M}(\pi(p))} \\
&  = \frac{H_{\partial M}(\pi(p))r(p)+n-1}{(n-1)H_{\partial M}(\pi(p))}.
\end{align}  

If $H_{\partial M}(\pi(p)) <0$ then by Lemma \ref{lem-focalpoints}
both sides of the inequality are negative. If $H_{\partial M}(\pi(p)) >0$
then both sides of the inequality are positive. Thus 
\be \label{eq-smoothcomparisonf}
\Delta r(p) \leq \frac{(n-1)H_{\partial M}(\pi(p)) }{H_{\partial M}(\pi(p)) r(p)+n-1}.
\ee

Finally, we deal with the case in which there is $t_0 \geq 0$ such that $ \Delta r(\gamma (t_0)) = 0$. Suppose that $t_0=\inf\{t |\Delta r(\gamma (t))=0\}$. By (\ref{eq-bochner}),  $\Delta r(\gamma (t))$ is non increasing. Since $0 \leq t_0$ and $ \Delta r(\gamma (t_0)) = 0$, then $H_{\partial M}(\pi(p))=\Delta r(\gamma (0)) \geq 0$. This means that the right hand side of (\ref{eq-smoothcomparisonf}) is nonnegative for $t \in [0,t_0]$. Using again that $\Delta r(\gamma (t))$ is non increasing we see that $\Delta r(\gamma (t)) \leq 0$ for $t \geq t_0$. Thus, (\ref{eq-smoothcomparisonf}) holds for $t \geq t_0$.
\end{proof}

\begin{defn}
Let $f$ be a continuous real valued function. An upper barrier for $f$ at the point $x_0$
is a $C^2$ function $f_{x_0}$ defined in some neighborhood of $x_0$ such that
$f \leq f_{x_0}$ and $f(x_0)=f_{x_0}(x_0)$.
\end{defn}

\begin{defn}
Let $f$ be a continuous function. $\Delta f(x_0) \leq a$ in the barrier sense
if for all $\varepsilon >0$ there is an upper barrier $f_{x_0,\varepsilon}: U_\varepsilon \to \R$ for $f$ at $x_0$ with 
\be 
\Delta f_{x_0,\varepsilon} \leq a+\varepsilon.
\ee
\end{defn}

Now we are ready to extend Theorem \ref{thm-laplaciancomparison}.

\begin{proof}[Proof of Theorem \ref{thm-Blaplaciancomparison}]
For points in $M \setminus \cut(\partial M)$
the result follows by applying Theorem \ref{thm-laplaciancomparison}.
Suppose that $p \in \cut(\partial M)$. Take $q \in \partial M$ such that $r(p)=d(p,q)$. If $H_{\partial M}(q) r(p)+n-1=0$ there is nothing to prove. Otherwise, for all $\varepsilon >0$  we will define an upper barrier $r_{p,\varepsilon}: U_\varepsilon \to \R$ for $r$ at $p$ such that in $U_\varepsilon$
\be \label{eq-laplacianbarrier}
\Delta r_{p,\varepsilon} \leq \frac{(n-1)H_{\partial M}(q) }{H_{\partial M}(q) r(p)+n-1} +\varepsilon.
\ee

Let $U$ be an open set of $\partial M$ that contains $q$ such that the map $U \times [0,\delta_0) \to M$ given by $(z,t) \mapsto \exp(t\nabla r(z))$ is a diffeormophism. Let $x=(x_1,...,x_{n-1}):B(q) \to \R^{n-1}$ be a coordinate chart centered at $q$ such that $\partial_i (q)=\frac{\partial}{\partial x_i}(q)$ are orthonormal and $\bar{B}(q) \subset U$ is a closed ball centered at $q$.

We construct upper barrier functions by constructing distance functions to $(n-1)$-submanifolds $N_{\delta,\alpha} \subset M$. 
For $\delta \leq \delta_0$ and 
$\alpha > 0$ let 
$g: U \to \R$ be a smooth function that satisfies 
$g(q)=\delta$, at other points 
$0 \leq g < \delta$, $g$ has a maximum at $q$ and $\sum g_i''(0) \geq -\alpha$,
where $g_i$ is the i-th coordinate function of $g \circ x^{-1}$.  By the existence of partitions of unity, there is a smooth function $\varphi: \partial M \to \R$ that satisfies $0 \leq \varphi \leq 1$, $\varphi = 1$ in $\bar{B}(q)$ and $\spt \varphi \subset U$.
Hence, we can suppose that $g: \partial M \to \R$ is a smooth function such that
$g(q)=\delta$, at other points $0 \leq g < \delta$, $g$ has a maximum at $q$, $\sum g_i''(0) \geq -\alpha$ and $\spt g \subset U$.

We define
\be  
N_{\delta,\alpha}:= \left \{ \exp(g(z) \nabla r (z)) : z \in \partial M \right \}.
\ee

We claim that $\delta$ and $\alpha$ can be chosen such that there is a neighborhood $U_\varepsilon$ of $p$ for which
$r_{p,\varepsilon}: U_\varepsilon \to \R$ given by 
\be \label{eq-upperbf}
r_{p,\varepsilon}=d(\,\cdot\,, N_{\delta,\alpha}) + \delta
\ee is an upper barrier of $r$ at $p$ that satisfies (\ref{eq-laplacianbarrier}).

For $y \in M$
\be \label{eq-uppersatisfied}
r_{p,\varepsilon}(y)
=d(y, N_{\delta,\alpha}) + \delta
\leq \inf_{z \in  \partial M}\{r(y) - g(z)\} + \delta 
=r(y) + \inf_{z \in  \partial M}\{\delta - g(z)\}.
\ee
By definition of $g$, $\delta - g$ is nonnegative and it is zero only when $z=q$. Thus 
$r_{p,\varepsilon}(y) \leq r(y)$. Also $z=q$ is the only point in $\partial M$
for which $r(p) + (\delta - g(z))$ equals $r(p)$. 

$r_{p,\varepsilon}(p) = r(p)$ and $p$ is not in the cut locus of $N_{\delta,\alpha}$. Hence, there is a neighborhood of $p$ in which $r_{p,\varepsilon}$ is $C^2$. 

It remains to prove that (\ref{eq-laplacianbarrier}) is true. 
This follows by continuity of $\Delta r_{p,\varepsilon}$ at $p$,
Theorem \ref{thm-laplaciancomparison} applied to the function $r_{p,\varepsilon}$ and continuity of the functions
\be\label{eq-comparisonf}   
(\delta, \alpha) \mapsto \frac{(n-1)H_{N_{\delta,\alpha}}(\exp(g(q)\nabla r(q))) }{H_{N_{\delta,\alpha}}(\exp(g(q)\nabla r(q))) r_{p,\varepsilon}(p)+n-1}
\ee
and
\be \label{eq-comparisonf1var}
\tilde{q} \mapsto \frac{(n-1)\Delta r (\tilde{q})}{\Delta r (\tilde{q}) r_{p,\varepsilon}(p)+n-1}
\ee
at $(0,0)$ and $q$, respectively, where $H_{N_{\delta,\alpha}}$ denotes
the mean curvature of $N_{\delta,\alpha}$ in the inward normal direction. More explicitly, for all $y$ in a neighborhood $U_\varepsilon$ of $p$ the following is satisfied
\begin{align} 
\Delta r_{p,\varepsilon}(y) & < \tfrac{\varepsilon}{3} + \Delta r_{p,\varepsilon}(p)   \\
& < \tfrac{\varepsilon}{3} +
 \frac{(n-1)H_{N_{\delta,\alpha}}(\exp(g(q)\nabla r(q))) }{H_{N_{\delta,\alpha}}(\exp(g(q)\nabla r(q))) r_{p,\varepsilon}(p)+n-1} \\
& < \tfrac{\varepsilon}{3}+ \tfrac{\varepsilon}{3} +
 \frac{(n-1) \Delta r (\exp(g(q)\nabla r(q))) }{\Delta r (\exp(g(q)\nabla r(q))) r(p)+n-1}\\
& < \tfrac{\varepsilon}{3}+ \tfrac{\varepsilon}{3}+ \tfrac{\varepsilon}{3} + \frac{(n-1)H_{\partial M}(q) }{H_{\partial M}(q) r(p)+n-1}.
\end{align}

Since $H_{\partial M}(q) r(p)+(n-1) \neq 0$ (\ref{eq-comparisonf1var}) is continuous at $q$. The continuity at $(0,0)$ of the function given in (\ref{eq-comparisonf}) follows from the continuity at $(0,0)$ of
$(\delta, \alpha) \mapsto H_{N_{\delta,\alpha}}(\exp(g(q)\nabla r(q)))$.

Let's calculate $H_{N_{\delta,\alpha}}(\tilde{q})$ where $\tilde{q}=\exp(g(q)\nabla r(q))$. Recall that the map $U \times [0,\delta_0) \to M$ given by $(z,t) \mapsto \exp(t\nabla r(z))$ is a diffeormophism and that $(x_1,...,x_{n-1}):B(q) \subset U \to \R^{n-1}$ is a coordinate system centered at $q$ such that $\partial_i (q)=\frac{\partial}{\partial x_i}(q)$ are orthonormal. Thus, we can suppose that $\partial_i $ are vector fields defined on $U \times [0,\delta_0)$.
Then, the tangent space of $N_{\delta,\alpha}$ at $\tilde{q}$
is spanned by $E_i (\tilde{q}): = \partial _i (\tilde{q})+ g_i'(0)\partial_n (\tilde{q})$, $i=1,...,n-1$, where $\partial_n  = \nabla r $. 

\be 
\nabla_{E_i} E_i (\tilde{q}) 
= \nabla_{\partial_i} (\partial _i + g_i'(0)  \partial_n) (\tilde{q}) 
=( \nabla_{\partial_i} \partial _i + 
g_i'(0) \nabla_{\partial_i} \partial_n +
g_i''(0) \partial_n )(\tilde{q}) = ( \nabla_{\partial_i} \partial _i +
g_i''(0) \partial_n )(\tilde{q})
\ee

\begin{align} 
H_{N_{\delta,\alpha}}(\tilde{q}) & = - \sum_{i=1}^{n-1} \inner{ \nabla r_{p,\varepsilon},\nabla_{E_i} E_i}(\tilde{q}) \\
& = - \sum_{i=1}^{n-1} \inner{ \partial_n,\nabla_{\partial_i} \partial _i +
g_i''(0)  \partial_n}(\tilde{q}) \\
& = \Delta r (\tilde{q}) - \sum g_i''(0) \leq \Delta r (\tilde{q}) +\alpha.
\end{align}
\end{proof}

\subsection{Volume and Area Estimates}\label{subsec-VolBounds}

Recall that for $\delta>0$,  
\be 
M^{\delta}= \{ p \in M | r(p)> \delta \}
\ee
and $\partial M^\delta$ is the boundary (as a metric subspace of $M$) of $M^\delta$. In this subsection, area and volume estimates of $\partial M^\delta$,  and annular regions, $M^{\delta_2} \setminus M^{\delta_1}$, respectively, are proven. 

Using the normal exponential map we can write the volume form of $M$ at a point $p=\exp_x(t\nabla r (x))$ as $A(x,t)dm(x)dt$, where $x \in \partial M$. In the following lemma we bound $A(x,t)$.

\begin{lem}\label{lem-SmoothArea}
Let $M^n$  be an $n$-dimensional Riemannian manifold with smooth boundary such that $(M,d)$ is complete as metric space, $\ric(M\setminus \partial M) \geq 0$ and $H_{\partial M} \leq H$. In $M\setminus \cut(\partial M)$ write
the volume form of $M$ as $A(x,t)dm(x)dt$, where $dm(x)$ is the volume form of $\partial M$. 
Then,
\begin{align}
A(x,\delta) \leq  A(x,0) A_{n,H}(\delta).
\end{align}
\end{lem}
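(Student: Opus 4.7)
My plan is to convert the pointwise Laplace bound in Theorem \ref{thm-laplaciancomparison} into the multiplicative bound on $A(x,\delta)$ by integrating along the unit-speed normal geodesic $\gamma_x(t):=\exp_x(t\nabla r(x))$. The central ingredient is the standard first-variation identity
\[
\frac{\partial}{\partial t}\ln A(x,t)\;=\;\Delta r(\gamma_x(t)),\qquad \gamma_x(t)\in M\setminus\cut(\partial M),
\]
which I would either cite or derive as follows: write $A(x,t)$ as the determinant of the Jacobi-field matrix along $\gamma_x$ whose initial data comes from an orthonormal frame of $T_x\partial M$ together with the Weingarten operator of $\partial M$ at $x$; its logarithmic derivative equals the trace of the shape operator of the level set $r^{-1}(t)$ at $\gamma_x(t)$, and that trace coincides with $\Delta r(\gamma_x(t))$ because $|\nabla r|=1$ forces $\hess r(\nabla r,\cdot)=0$. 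This is exactly the situation in which Theorem \ref{thm-laplaciancomparison} is available, since we are working on $M\setminus\cut(\partial M)$ where $r$ is smooth.

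Given this identity, Theorem \ref{thm-laplaciancomparison} applied at $p=\gamma_x(t)$ (noting $\pi(p)=x$ and $r(p)=t$) gives, for $t\in[0,\delta]$,
\[
\frac{\partial}{\partial t}\ln A(x,t)\;\le\;\frac{(n-1)\,H_{\partial M}(x)}{H_{\partial M}(x)\,t+n-1}.
\]
Before integrating I would check that the denominator stays strictly positive on $[0,\delta]$: trivial if $H_{\partial M}(x)\ge 0$, and if $H_{\partial M}(x)<0$ it follows from Lemma \ref{lem-focalpoints}, which forces $\gamma_x$ to cease minimizing past $-(n-1)/H_{\partial M}(x)$, so any $\delta$ for which $\gamma_x(\delta)\in M\setminus\cut(\partial M)$ automatically satisfies $\delta<-(n-1)/H_{\partial M}(x)$. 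Integrating from $0$ to $\delta$ then yields
\[
A(x,\delta)\;\le\;A(x,0)\biggl(\frac{H_{\partial M}(x)\,\delta+n-1}{n-1}\biggr)^{n-1}.
\]

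To replace $H_{\partial M}(x)$ by the uniform bound $H$, I would invoke monotonicity: on the range $h\delta+n-1\ge 0$ the map $h\mapsto (h\delta+n-1)^{n-1}$ is nondecreasing, and from $H_{\partial M}(x)\le H$ together with $H_{\partial M}(x)\delta+n-1>0$ we also get $H\delta+n-1>0$, so $A(x,\delta)\le A(x,0)A_{n,H}(\delta)$ with $A_{n,H}(\delta)$ taking its nonzero value. The one step I expect to require genuine care is the first-variation identity $\partial_t\ln A=\Delta r$: it is classical, but should be set up so that the exclusion of $\cut(\partial M)$ (rather than only of first focal points of $\partial M$) is clearly what makes both the Jacobi-field representation of $A$ and the smooth Laplace comparison simultaneously valid along the entire segment of $\gamma_x$ used in the integration.
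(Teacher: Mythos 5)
Your argument is correct and mirrors the paper's proof almost exactly: both rely on the identity $\Delta r = A'/A$ along normal geodesics, apply Theorem \ref{thm-laplaciancomparison}, use Lemma \ref{lem-focalpoints} to guarantee the denominator stays positive up to $\delta$, and integrate. The only cosmetic difference is that the paper replaces $H_{\partial M}(x)$ by $H$ in the integrand before integrating (using monotonicity of $h\mapsto(n-1)h/(ht+n-1)$), whereas you integrate first and invoke monotonicity of $h\mapsto(h\delta+n-1)^{n-1}$ afterward; these are equivalent.
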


Note that when $H=0$,  $A_{n,H}=1$. Thus, $A(x,\delta) \leq A(x,0)$.

\begin{proof}
Let $p \in M\setminus (\cut(\partial M)\union \partial M)$. Then there is $(x,\delta) \in \partial M \times \R$ such that $r(p)=\delta=d(x,p)$. Let $\gamma$ be the minimizing geodesic from $x$ to $p$. Note that if $H_{\partial M}(x) \leq H$ then 
\be 
\frac{(n-1)H_{\partial M}(x) }{H_{\partial M}(x) r(\gamma(t))+n-1} \leq 
\frac{(n-1)H }{H t+n-1}.
\ee 
Thus, by Theorem \ref{thm-laplaciancomparison} and since $\Delta r= A'/A$ 
\begin{align}\label{eq-areafunction}
\frac{A'}{A}(x,t) \leq \frac{(n-1)H}{Ht +n-1}.
\end{align}
By Lemma \ref{lem-focalpoints}, $Ht +n-1 \neq 0$ for $0 \leq t \leq \delta$. Thus, integrating
(\ref{eq-areafunction}) with respect to $t$ from $0$ to $\delta$ we get
\begin{align}
\ln \left( \frac{A(x,\delta)}{A(x,0)}\right) 
\leq (n-1) \ln \left(\frac{H\delta+ n-1}{n-1}\right).
\end{align}
Taking exponentials in both sides of the inequality and arranging terms:
\begin{align}
A(x,\delta)
\leq A(x,0) \left(\frac{H\delta+ n-1}{n-1}\right)^{n-1}= A(x,0) A_{n,H}(\delta).
\end{align}
\end{proof}

Using this estimate we obtain bounds for the volume of annular regions, $M^{\delta_2} \setminus M^{\delta_1}$.

\begin{proof}[Proof of Theorem \ref{thm-volumebounds}]

\begin{align}
\vol(M^{\delta_2} \setminus M^{\delta_1}) & =
\int_{\delta_2}^{\delta_1}
\int_{x \in \partial M} 
A(x,t)dm(x)dt \\
& \leq 
\int_{\delta_2}^{\delta_1}
\int_{x \in \partial M} 
A_{n,H}(t)dm(x)dt \\
& = \vol(\partial M) \int_{\delta_2}^{\delta_1}A_{n,H}(t)dt
\end{align}

For $H=0$ we have that
\be
\int_{\delta_2}^{\delta_1}A_{n,H}(t)dt=\delta_1-\delta_2. 
\ee
For $H \neq 0$ we get
\begin{align}
\int_{\delta_2}^{\delta_1}A_{n,H}(t)dt
& = \frac{n-1}{Hn}\vol(\partial M)
\left(\frac{H\delta+ n-1}{n-1}\right)^n \bigg
\rvert_{\tilde{\delta_2}}^{\tilde{\delta_1}},
\end{align}
where $\tilde{\delta_i}=\delta_i$ if $H>0$. If $H<0$, by definition $A_{n,H}(t)=0$
for $t \geq -(n-1)/H$. Hence, $\tilde{\delta_i}=\min\{\delta_i, -(n-1)/H\}$.

To get volume estimates we just have to evaluate the above integrals. We pick $\delta_2=0$. If $H \geq 0$, $r \leq D$. If $H<0$, by Lemma \ref{lem-focalpoints} $ r \leq -(n-1)/H$. Thus, choose $\delta_1=-(n-1)/H$ when $H<0$ and $\delta_1=D$ otherwise.
\end{proof}

\begin{rmrk}
Since $\cut(r)$ has n-zero measure we can get estimates of the volume of $M^{\delta_2} \setminus M^{\delta_1}$ in a straight forward way. But when calculating estimates of the volume of $\partial M^ \delta$ we can encounter that $\cut(r)$ has $n-1$ nonzero measure or that $\partial M^ \delta$ is not a submanifold. For example, consider a solid hyperboloid in 3-dimensional euclidean space. For an appropriate $\delta$, $\partial M^ \delta$ is exactly two cones that intersect each other at the tip. Hence, the volume of 
$\partial M^ \delta$ is not defined for all $\delta$. 
\end{rmrk}

\begin{proof}[Proof of Theorem \ref{thm-areabounds}]
By Theorem 5.3 in \cite{AK} we know that $\mathcal{L}^1$-almost everywhere 
\be
\vol( \partial M^\delta )=  \frac{d}{dt} \vol(M \setminus M^t)|_{t=\delta},
\ee
and by Theorem \ref{thm-volumebounds} that 
\be 
\vol(M \setminus M^t) \leq \vol(\partial M) \int_{0}^{t}
A_{n,H}(s)ds,
\ee
where $A_{n,H}$ is the continuous function given in (\ref{eq-AHn}). Thus, $\vol( \partial M^\delta) \leq \vol(\partial M)A_{n,H}(\delta)$.
\end{proof}

\begin{rmrk}
Theorem 5.3 in \cite{AK} holds for metric spaces. This exact theorem is the Euclidean Slicing Theorem for the euclidean and manifold setting, Theorem 4.3.2 in \cite{Federer}.
\end{rmrk}

\section{Convergence Theorems}\label{sec-IFC}

In the first subsection we state Wenger Compactness Theorem and Lemma \ref{lem-IFdforsubset} that gives an estimate of the SWIF distance between a manifold and a subset of it. These results are used  in the second subsection to prove Theorem \ref{thm-IFdeltaconvergence} about SWIF convergence of sequences of $\delta$-inner regions; when $\delta=0$ we get Theorem \ref{thm-IFcompact1}. Then we prove Theorem \ref{thm-IFlimits}. At the end of this section we discuss the SWIF convergence, if any, of some sequences of manifolds.
 
\subsection{Sormani-Wenger Intrinsic Flat Convergence}\label{ssect-IFCR}

Federer-Fleming introduced the term "integral current" (lying in Euclidean space) and extended Whitney's notion of flat distance to integral currents. Ambrosio-Kirchheim in \cite{AK} extended Federer-Fleming's integral currents to integral currents lying in arbitrary metric spaces. Later on Sormani-Wenger in \cite{SorWen2} motivated by both Gromov-Hausdorff distance and flat distance defined intrinsic flat distance between $n$-integral current spaces $(X,d_X,T)$. 

In general, $X$ is a countably $\mathcal{H}^n$-rectifiable metric space, $d_X$ is the metric on $X$ and  
$T$ an integral current  in $\intcurr_n(\bar{X})$. See Definition 2.44 in \cite{SorWen2}.  In the setting of manifolds, the $n$-integral current space associated to an oriented manifold $(M^n,g)$ is just $(M,d,T)$, where $d$ is the metric on $M$ induced by $g$ and $T$ is integration over $M$ of top differential forms of $M$, $T(\omega)=\int_{M}\omega$. 

\begin{lem}\label{lem-IFdforsubset}
Let $(M^n,g)$ be an oriented Riemannian manifold and $U$ an open set of $M$.
Then 
\be
d_{\mathcal{F}}((M^n,d,T),(U,d',T')) \leq \vol(M \setminus U),   
\ee
where $d'=d|_U$ and $T'$ is integration over $U$ of top differential forms of $U$. 
\end{lem}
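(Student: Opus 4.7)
The approach is to unwind the definition of the SWIF distance and exhibit an explicit common metric space in which the ambient flat distance can be estimated directly. By definition,
\bee
d_{\mathcal{F}}((X_1,d_1,T_1),(X_2,d_2,T_2)) = \inf_{Z,\varphi_1,\varphi_2} d^{Z}_{F}(\varphi_{1\#}T_1,\,\varphi_{2\#}T_2),
\eee
with the infimum over complete metric spaces $Z$ and isometric embeddings $\varphi_i:\bar X_i \hookrightarrow Z$. I would take $Z=\bar M$ (the metric completion of $(M,d)$), let $\varphi_1$ be the identity on $M$, and let $\varphi_2$ be the inclusion $\bar U\hookrightarrow \bar M$. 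Both embeddings are isometric: the identity trivially, and the inclusion because $d'=d|_U$ by hypothesis, so the completion of $(U,d')$ sits isometrically as a closed subset of $\bar M$.

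Next I would identify the two pushforwards. Clearly $\varphi_{1\#}T = T$. Because $\varphi_2$ is pointwise inclusion and $T'$ is integration against the Riemannian volume form on $U$ inherited from $g$, the functorial formula
\bee
\varphi_{2\#}T'(f,\pi_1,\ldots,\pi_n) \;=\; T'(f\circ\varphi_2,\,\pi_1\circ\varphi_2,\ldots,\pi_n\circ\varphi_2)
\eee
collapses to integration of $f\,d\pi_1\wedge\cdots\wedge d\pi_n$ over $U\subset\bar M$, so $\varphi_{2\#}T' = T\rstr U$. Since $M = U \sqcup (M\setminus U)$ as Borel sets,
\bee
\varphi_{1\#}T - \varphi_{2\#}T' \;=\; T - T\rstr U \;=\; T\rstr(M\setminus U),
\eee
which is $n$-dimensional integer rectifiable on $\bar M$ with mass $\vol(M\setminus U)$.

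To bound the ambient flat distance, I would use the trivial decomposition $A:=T\rstr(M\setminus U)$ and $B:=0$, so that $\varphi_{1\#}T - \varphi_{2\#}T' = A + \bdry B$. This yields
\bee
d^{\bar M}_F(\varphi_{1\#}T,\varphi_{2\#}T') \;\leq\; \mass(A) + \mass(B) \;=\; \vol(M\setminus U),
\eee
and taking the infimum over realizations gives the lemma. The only nontrivial step is justifying the identification $\varphi_{2\#}T' = T\rstr U$ inside the Ambrosio--Kirchheim formalism, which comes down to checking that an isometric embedding pushes the mass measure and orientation forward unchanged; once that is in hand no filling $(n{+}1)$-current is needed and the estimate reduces to the elementary mass computation above.
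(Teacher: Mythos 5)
The paper states this lemma without proof (it appears in the review subsection on SWIF convergence), so there is no internal argument to compare against. Your proof is correct and is the natural one: embed both integral current spaces into $\bar M$ via the identity and the inclusion $\bar U\hookrightarrow\bar M$ (isometric because $d'=d|_U$), identify $\varphi_{2\#}T'=T\rstr U$, and bound the ambient flat distance by the trivial decomposition $A=T\rstr(M\setminus U)$, $B=0$. The one point worth making explicit is that $A=\varphi_{1\#}T-\varphi_{2\#}T'$ is automatically an integral current because it is a difference of integral currents; this matters since the flat-norm infimum in the Ambrosio--Kirchheim/Wenger setting ranges over $A\in\intcurr_n(Z)$ and $B\in\intcurr_{n+1}(Z)$, and $T\rstr(M\setminus U)$ alone need not have boundary of finite mass unless one already knows $T'$ is integral (which is built into the hypothesis that $(U,d',T')$ is an integral current space).
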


\begin{thm}\label{thm-WengerCompactness}[Wenger, Theorem 1.2 in \cite{Wenger-compactness}]
Given a sequence of complete oriented Riemannian manifolds, $M_j$, of the same dimension with $\vol(M_j)\le V$, $\vol(\partial M_j) \le A$ and $\diam(M_j) \le D$, then a subsequence converges in the SWIF sense to an integral current space. 
\end{thm}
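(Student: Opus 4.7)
The plan is to reduce the statement to the Ambrosio-Kirchheim compactness theorem for integral currents in a fixed complete metric space. First, I would isometrically embed each $(M_j, d_j)$ into the common separable complete metric space $\ell^\infty(\N)$ via a Kuratowski embedding $\iota_j\colon p \mapsto (d_j(p, x_k^{(j)}) - d_j(x_1^{(j)}, x_k^{(j)}))_k$, where $\{x_k^{(j)}\}_k$ is a countable dense subset of $M_j$ and $x_1^{(j)}$ is a chosen basepoint that maps to the origin. Since $\diam(M_j) \leq D$, each image $\iota_j(M_j)$ sits in the closed ball $\bar{B}_D(0) \subset \ell^\infty(\N)$. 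Pushing forward yields currents $\tilde T_j := (\iota_j)_* T_j \in \intcurr_n(\ell^\infty(\N))$ with $\mass(\tilde T_j) \leq V$, $\mass(\partial \tilde T_j) \leq A$, and $\spt(\tilde T_j) \subset \bar{B}_D(0)$.

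Next, because closed balls in $\ell^\infty(\N)$ are bounded but not compact, Ambrosio-Kirchheim compactness does not apply directly. I would overcome this using the Ambrosio-Kirchheim isoperimetric inequality for integral currents in metric spaces, whose constant depends only on the dimension $n$. The boundary $\partial \tilde T_j$ is an $(n-1)$-cycle of mass at most $A$, so for each $\varepsilon>0$ one first fills $\partial \tilde T_j$ by an $n$-current $F_j$ of mass bounded in terms of $n$ and $A$; this reduces the problem to $n$-cycles $\tilde T_j - F_j$ of uniformly bounded mass. For each $\varepsilon>0$ and each $j$, one then constructs an approximation $\tilde T_j^\varepsilon \in \intcurr_n(\ell^\infty(\N))$ supported on a finite $\varepsilon$-net within $\bar{B}_D(0)$ by discretizing onto the net and using further isoperimetric fillings to control the error uniformly in $j$. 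A diagonal argument over $\varepsilon_\ell \to 0$ then produces a subsequence $\tilde T_{j_k}$ that is Cauchy in flat norm in $\intcurr_n(\ell^\infty(\N))$, and hence converges flatly to some $T_\infty \in \intcurr_n(\ell^\infty(\N))$.

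Finally, I would define $(W, d, T) := (\spt(T_\infty), d_{\ell^\infty}|_{\spt(T_\infty)}, T_\infty)$, which is an $n$-integral current space in the sense of Definition 2.44 of \cite{SorWen2}. Since the SWIF distance is the infimum of flat distances over isometric embeddings into common complete metric spaces, the flat convergence $\tilde T_{j_k} \to T_\infty$ inside $\ell^\infty(\N)$ immediately yields $(M_{j_k}, d_{j_k}, T_{j_k}) \Fto (W, d, T)$. The main obstacle is the second step: the construction of the approximations $\tilde T_j^\varepsilon$ with supports in a common compact subset of $\bar{B}_D(0)$, uniformly in $j$. Handling the non-vanishing boundaries $\partial T_j$ (so that the pure isoperimetric inequality for cycles does not apply directly) and fitting the discretization together with the filling currents into a coherent scheme with the correct bounds on both mass and boundary mass is the technical heart of Wenger's argument.
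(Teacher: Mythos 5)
The paper does not prove this statement at all: Theorem \ref{thm-WengerCompactness} is quoted verbatim from Wenger \cite{Wenger-compactness} (Theorem 1.2 there, cf.\ Theorem 4.9 in \cite{SorWen2}) and is used as a black box, so your proposal can only be measured against Wenger's actual argument. Judged that way, your outline points in the right general direction (isometric embedding into a common ambient space, Ambrosio--Kirchheim isoperimetric filling, closure/compactness for integral currents, a diagonal argument), but the step you yourself flag as ``the technical heart'' is genuinely missing, and it is not a routine verification. Two concrete problems. First, choosing an independent Kuratowski embedding $\iota_j$ for each $M_j$ gives you no relation whatsoever between the pushforwards $\tilde T_j=(\iota_j)_*T_j$ for different $j$: even a constant sequence $M_j=M$, embedded via unrelated isometric embeddings into $\ell^\infty(\N)$, need not be flat-Cauchy in that fixed ambient space. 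The ambient space in which flat convergence happens cannot be fixed in advance; it has to be assembled along the subsequence (gluing successive embeddings chosen so that consecutive currents are flat-close), which is exactly how Wenger and Sormani--Wenger proceed.

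Second, the discretization ``onto a finite $\varepsilon$-net within $\bar B_D(0)$'' presupposes a uniform bound on the cardinality of $\varepsilon$-nets of $\spt\tilde T_j$, i.e.\ uniform total boundedness of the supports. No such bound follows from the hypotheses $\mass(T_j)\le V$, $\mass(\partial T_j)\le A$, $\diam(M_j)\le D$ alone (there is no Ricci or density assumption in Wenger's theorem), and $\bar B_D(0)\subset\ell^\infty(\N)$ is not totally bounded, so there is no common compact set to discretize into. Wenger's proof supplies precisely this missing ingredient: using the Ambrosio--Kirchheim isoperimetric inequality he proves a lower volume-density estimate valid off a piece of small mass, yielding a decomposition $T_j=T_j'+R_j$ with $\mass(R_j)<\varepsilon$ and with $\spt T_j'$ totally bounded with covering numbers controlled only by $n$, $V$, $A$, $\varepsilon$; Gromov's compactness then places these supports in a single compact metric space, the Ambrosio--Kirchheim compactness and closure theorems give a flat-convergent subsequence of integral currents there, and a diagonal-and-gluing argument over $\varepsilon\to 0$ produces the limit integral current space. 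Your filling of $\partial\tilde T_j$ to reduce to cycles does nothing toward this uniform total boundedness, so as written the proposal does not close the gap it correctly identifies.
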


\subsection{SWIF Compactness Theorems}\label{ssec-IFCTheorem}

Given a $\delta$-inner region, $M^{\delta}=r^{-1}((\delta, \infty)) \subset M^n$, we associate to it an $n$-integral current space: $(M^\delta,d_{M^\delta},T^{\delta})$, where $d_{M^\delta}$ is the metric of $M$ restricted to $M^{\delta}$ and $T^{\delta}$ is integration over $M^\delta$ of top differential forms of $M^\delta$, $T^{\delta}(\omega)= \int_{M^\delta}\omega$.

\begin{thm}\label{thm-IFdeltaconvergence}
Let $D, A> 0$ and $H \in \R$. If $(M^n_j,g_j)$ is a sequence of 
n-dimensional complete connected oriented Riemannian manifolds with smooth boundary that satisfy
$(M_j,d_j)$ complete as metric spaces,
\be  
\diam(M_j)\leq D, \,\,\,\ric(M_j \setminus \partial M_j)\geq 0, \,\,\,
\vol(\partial M_j)\leq A\,\,\text{and} \,\, H_{\partial M_j} \leq H,
\ee
then for $\mathcal{L}^1$-a.e. $\delta \geq 0$  there is an $n$-integral current space   
$(W_\delta,d_{W_\delta},T_\delta)$ and a subsequence that depends on $\delta$ such that $(M_{j_k}^\delta, d_{M_{j_k}^\delta},T_{j_k}^\delta) \Fto (W_\delta,d_{W_\delta},T_\delta)$.
\end{thm}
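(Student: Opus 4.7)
The plan is to apply Wenger's compactness theorem (Theorem \ref{thm-WengerCompactness}), in its integral current space form, to the sequence $(M_j^\delta, d_{M_j^\delta}, T_j^\delta)$, after verifying the three uniform bounds it requires: on total mass, on boundary mass, and on diameter. Two of these are straightforward. The diameter bound is trivial, since $\diam(M_j^\delta) \leq \diam(M_j) \leq D$ for every $j$ and every $\delta$. For the total mass, Theorem \ref{thm-volumebounds} combined with $\vol(\partial M_j) \leq A$ gives
\be
\vol(M_j^\delta) \leq \vol(M_j) \leq A \int_0^{\tilde D} A_{n,H}(t)\,dt =: V,
\ee
uniformly in $j$ and $\delta$.

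The delicate point is the boundary mass bound. Theorem \ref{thm-areabounds} asserts $\vol(\partial M_j^\delta) \leq \vol(\partial M_j) A_{n,H}(\delta) \leq A\,A_{n,H}(\delta)$, but only for $\delta$ outside a Lebesgue null set $N_j \subset [0,\infty)$ that a priori depends on $j$. I would form $N := \bigcup_j N_j$, which still has $\mathcal{L}^1$-measure zero, and restrict attention to $\delta \in [0,\infty)\setminus N$; on this set the inequality holds for every $j$ simultaneously. One then translates this Riemannian area bound into a bound on the current boundary mass $\mass(\partial T_j^\delta)$ by invoking the Ambrosio--Kirchheim slicing formula, which is exactly the tool already used in the proof of Theorem \ref{thm-areabounds}. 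This confirms that for $\delta \notin N$ each $T_j^\delta$ is a genuine integral current with $\mass(\partial T_j^\delta) \leq A\,A_{n,H}(\delta)$.

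With all three uniform bounds in place for every $\delta \notin N$, Wenger's compactness theorem produces, for each such $\delta$, a subsequence (depending on $\delta$) $(M_{j_k}^\delta, d_{M_{j_k}^\delta}, T_{j_k}^\delta)$ that converges in SWIF sense to some $n$-integral current space $(W_\delta, d_{W_\delta}, T_\delta)$, which is the desired conclusion. The main obstacle is precisely the exceptional-set bookkeeping just described: Theorem \ref{thm-areabounds} is an almost-everywhere statement for each fixed manifold, while Wenger's theorem requires a bound that is simultaneously valid along the whole sequence. Taking a countable union of null sets neutralizes this difficulty cleanly, so that no diagonal argument in $\delta$ is required, and the theorem follows.
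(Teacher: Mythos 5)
Your proposal is correct and follows essentially the same route as the paper: verify the three uniform hypotheses of Wenger's compactness theorem (diameter via monotonicity, volume via Theorem \ref{thm-volumebounds}, boundary area via Theorem \ref{thm-areabounds}) for the $\delta$-inner regions and conclude. You are in fact slightly more careful than the paper's own proof, which never explicitly notes that the exceptional null set from Theorem \ref{thm-areabounds} depends on $j$ and must be replaced by the countable union $N=\bigcup_j N_j$ to obtain a single full-measure set of $\delta$'s on which the boundary-mass bound holds along the entire sequence; this is a genuine, if small, gap in the paper's exposition that your argument fills.
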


\begin{proof}
The result follows from Theorem \ref{thm-WengerCompactness}. We just need to check that $(M^n_j,g_j)$ satisfies the hypotheses of that theorem. For $\delta=0$, $\diam(M_j) \leq D$, $\vol(\partial M_j) \leq A$ and $\vol(M_j)$ is uniformly bounded by Theorem \ref{thm-volumebounds}. For $\delta>0$, $\diam(M_j^\delta) \leq \diam(M_j) \leq D$, where the metric of $M_j^\delta$ is the restricted metric. By Theorems \ref{thm-volumebounds} and \ref{thm-areabounds} for $\mathcal{L}^1$-a.e. $\delta \geq 0$ $\vol(M^\delta_j) \leq \vol(M_j)$ and $\vol(\partial M^\delta_j)$ are uniformly bounded above. Hence, we can apply Theorem \ref{thm-WengerCompactness}.
\end{proof}

\begin{rmrk}
The proof above consisted on showing that the given sequence satisfy the hypotheses of Theorem \ref{thm-WengerCompactness}. Note that this could not be done if  
$H_{\partial M_j} \to \infty$ since by Theorem \ref{thm-volumebounds} we would get $\lim_{j \to \infty} \vol(M_j)=\infty$. See Example \ref{ex-jfold}.
We also cannot apply Theorem \ref{thm-WengerCompactness} when $\diam(M_j) \to \infty$. See Example \ref{ex-diam}. 
\end{rmrk}

\begin{proof}[Proof of Theorem \ref{thm-IFlimits}]

By the triangle inequality 
\begin{align} \label{ineq-IFlimits}
d_{\mathcal{F}}(W_{\delta_i},W) & \leq 
d_{\mathcal{F}}(W_{\delta_i},M_{j_k}^{\delta_i}) + d_{\mathcal{F}}(M_{j_k}^{\delta_i},M_{j_k})+ d_{\mathcal{F}}(M_{j_k},W).
\end{align}

Now, by Theorem \ref{thm-volumebounds}:
\be 
d_{\mathcal{F}}(M_{j_k}^{\delta_i},M_{j_k}) \leq \vol(M_{j_k} \setminus M_{j_k}^{\delta_i}) \leq V(\delta_i,H,A,n),
\ee
where $V(\delta,H,A,n)$ is a continuous function such that
$\lim_{\delta \to 0}V(\delta,H,A,n)=0$. Then, taking limits in (\ref{ineq-IFlimits}) we get 
\be 
\lim_{i \to \infty} d_{\mathcal{F}}(W_{\delta_i},W)=  \lim_{i \to \infty}\lim_{k \to \infty} d_{\mathcal{F}}(W_{\delta_i},W)=0.
\ee
\end{proof}

\subsection{Examples}\label{ssec-examples}

In this Subsection we present three examples of sequences of Riemannian manifolds.
The first two examples presented are stated for compact manifolds with no boundary but can easily be generalized to manifolds with boundary. Example \ref{ex-IlmanemSeq} defines a sequence that  converges in SWIF sense but not in GH sense.  Example \ref{ex-jfold} shows the necessity of a uniform bound of the mean curvature required in Theorem \ref{thm-IFcompact1}. Example \ref{ex-diam} shows the necessity of a uniform bound of the diameter required in Theorem \ref{thm-IFcompact1} and that equality holds in both volume and area estimates given in Theorem \ref{thm-volumebounds} and Theorem \ref{thm-areabounds}. Moreover, this example shows that the assumption $H<0$ in Theorem \ref{thm-IFcompact2} is needed.

\begin{ex}[Example A.7 in \cite{SorWen2}]\label{ex-IlmanemSeq}
We define below a sequence of manifolds with positive scalar curvature that converges to a round $n$-sphere in the SWIF sense. 

Let $M^n_j$ be diffeormorphic to a $n$-sphere of volume $V$. Suppose that $M_j$ contains a connected open domain $U_j$ isometric to a domain $M_0 \setminus \bigcup_{i=1}^{N_j} B(p_{j,i},R_j)$, where $M_0$ is a round sphere and $B(p_{j,i},R_j)$ are pairwise disjoint balls. Let each connected component of $M_j \setminus U_j$ and each ball $B(p_{j,i},R_j)$ have volume bounded above by $v_j/N_j$ where $v_j \to 0$. Then $M_j$ converges as long as $N_jR^{1/2}_j \to 0$.

The sequence does not converge in GH sense since, for $\varepsilon$ small enough, the number of $\varepsilon$-balls needed to cover $M_j$ goes to infinity as $j$ goes to infinity.
\end{ex}

\begin{ex}[Example 9.1 in \cite{SorAA}]\label{ex-jfold}
Let $M_j$ be the $j$-fold covering space of 
\be 
N_j= S^2 \setminus \left( B(p_+,1/j),B(p_-,1/j)\right),
\ee 
where $(S^2, g_{S^2})$ is the 2-dimensional unit sphere, $S^2$, with the stardand metric. 
The metric of $M_j$ is the lifting of the metric of $N_j$ and $p_+,p_-$ are opposite poles.
Then $\diam(M_j) \leq 4\pi$, $\vol(\partial M_j) \leq 4\pi$ and $H_{\partial M_j} \to \infty$.
No subsequence of $M_j$ converges in SWIF sense, so  $H_{\partial M_j} < H$ for all $j$ is necessary in Theorem \ref{thm-IFcompact1}.
\end{ex}

\begin{ex}\label{ex-diam}
Let $S^k$ be the $k$-dimensional unit sphere and $[0,j] \subset \R$ a closed interval with standard metrics. 
We endow $S^k\times [0,j]$ with the product metric and define 
\be 
M_j:=S^k\times [0,j] / \sim,
\ee 
where we identify antipodal points of $S^k\times \{j\}$.
Thus, $\partial M_j = S^k\times \{0\}$, $H_{\partial M_j}=0$,  $\vol(\partial M_j)=\vol(S^k)$. Note that $\diam(M_j) \to \infty$, $\vol(M_j)=j\vol(S^k) \to \infty$.  This sequence has no SWIF limit. So it proves the necessity of uniformly bounding the diameter of $\{M_j\}$ in Theorem \ref{thm-IFcompact1} and requiring $H <0$ in Theorem \ref{thm-IFcompact2}. 
\end{ex}

\bibliographystyle{plain}
\bibliography{bib2013}

\end{document}